\documentclass[preprint,5pt]{elsarticle}

\usepackage{amssymb}
\usepackage{exscale}
\usepackage{relsize}
\usepackage{amsmath,amsthm}
\usepackage{amsmath}
\usepackage{hyperref}
\usepackage{amssymb,latexsym}
\usepackage{mathrsfs}
\usepackage{enumerate}
\usepackage{indentfirst}
\usepackage{color}
\usepackage{tabularx}
\usepackage{booktabs}
\usepackage{makecell}
\usepackage{diagbox}
\usepackage{float}
\usepackage{algorithm}
\usepackage{multirow}
\usepackage{booktabs}
\usepackage{subcaption}   

\numberwithin{equation}{section}

\newtheorem{definition}{Definition}[section]
\newtheorem{example}{Example}[section]
\newtheorem{thm}{Theorem}[section]
\newtheorem{lem}{Lemma}[section]
\newdefinition{rmk}{Remark}[section]
\newtheorem{cor}{Corollary}[section]
\newcommand{\keywords}[1]{\par\noindent\textbf{Keywords:} #1}
\newproof{pf}{Proof}

\journal{Elsevier}

\begin{document}
	
	\begin{frontmatter}
		
		
		
		\title{ Data-driven accelerated multigrid method for Riesz space  fractional   diffusion equations                      }
				\author[]{Yang Hu$^{1}$ }
		
			\author[]{	Juan Zhang$^{*1,2}$\corref{cor1}}
		\ead{zhangjuan@xtu.edu.cn}

		\address{ \rm 1 School of Mathematics and Computational Science, Xiangtan University, Xiangtan, 411105,
			Hunan, People’s Republic of China}
		\address{ \rm 2 Hunan Key Laboratory for Computation and Simulation in Science and Engineering, Xiangtan
			University, Xiangtan, People’s Republic of China}

		\begin{abstract}
		\par
		\noindent   
The multigrid method is one of the most efficient iterative methods for solving large structured linear systems, and its efficiency stems from the complementarity  between the smoother and the coarse grid correction.  In this paper, we consider the block Toeplitz linear system obtained by discretizing time dependent Riesz space fractional diffusion equations, and employ the weighted Jacobi iterative method as a smoother. The weighted Jacobi smoother features a simple structure and low computational cost, yet its smoothing property is highly dependent on the relaxation parameter. We adopt Gaussian process regression to select the relaxation parameter to accelerate the convergence of the multigrid method. Numerical experiments confirm the theoretical analysis, which simultaneously verifies the effectiveness and superiority of the data-driven parameter selection strategy.

		\end{abstract}		
	    \end{frontmatter}
\keywords{ Multigrid methods, Fractional differential equations, Gaussian process regression }

	\section{ Introduction}\label{Introduction}
Fractional differential equations are  powerful mathematical tools that can accurately describe certain complex phenomena, such as  pricing options \cite{Agrawal,Meer}, control problems  \cite{Du,Pou},  image processing \cite{IEEE,1D_Riesz}.      Consider a two-dimensional Riesz space fractional diffusion equation as follows: 
\begin{equation}
	\left\{
	\begin{aligned}
		&d(x, y, t) \frac{\partial u(x, y, t)}{\partial t} - \frac{\partial^{\gamma_{1}} u(x, y, t)}{\partial |x|^{\gamma_{1}}} - \frac{\partial^{\gamma_{2}} u(x, y, t)}{\partial |y|^{\gamma_{2}}} = f(x, y, t), && (x, y, t) \in \Omega \times (0, T), \\
		&u(x, y, t) = 0, && (x, y, t) \in \partial \Omega \times (0, T), \\
		&u(x, y, 0) = u_0(x, y), && (x, y) \in \bar{\Omega},
	\end{aligned}
	\right.\label{new1}
\end{equation}
where $\Omega =(x_{L},x_{R})\times (y_{L},y_{R})$, $\partial \Omega$ denotes the boundary of $\Omega$, $f(x,y,t)$ is the  source term, $d(x,y,t)\geq 0$ is the coefficient function. $u_{0}(x,y)$ is the initial condition, $\gamma_{1}, \gamma_{2}\in (1,2)$. Specifically, Riesz fractional derivatives are defined as follows:
\begin{align*}
	\frac{\partial^{\gamma_1} u(x, y, t)}{\partial |x|^{\gamma_1}} 
	&= \sigma_{\gamma_1} \left( {}_{x_{L} }D_x^{\gamma_1} + {}_x D_{x_{R}}^{\gamma_1} \right) u(x, y, t),  \\
	\frac{\partial^{\gamma_2} u(x, y, t)}{\partial |y|^{\gamma_2}} 
	&= \sigma_{\gamma_2} \left( {}_{y_{L}} D_y^{\gamma_2} + {}_y D_{y_{R}}^{\gamma_2} \right) u(x, y, t),
\end{align*}
	with $\sigma_{\gamma_{l}}=-\frac{1}{2\cos ({\pi \gamma_{l}}/{2})}>0$ for $l=1,2$.
Owing to the nonlocality of fractional differential operators, it is difficult to obtain the analytical solutions of fractional differential equations, and thus developing efficient numerical methods to acquire their numerical solutions has become a research focus in this field.

Discretizing equation (\ref{new1}) on a uniform grid using the shifted
Grünwald difference scheme in space and the backward Euler scheme in
time yields a sequence of Toeplitz-like linear systems. Due to the nonlocality of fractional differential operators, the coefficient matrices are generally dense, which renders the discrete linear systems computationally expensive for general purpose solvers.  For solving
such linear systems, \cite{BZZ} proposed a class of
diagonal--Toeplitz splitting (DTS) iteration methods, implemented by
alternately solving shifted diagonal and shifted Toeplitz subsystems.
Building upon the DTS framework, \cite{Lu} used a  
 natural $\tau$
approximation of the symmetric Toeplitz component and proposed a
sine-transform-based splitting preconditioner. More recently, \cite{She} decomposed the coefficient matrix by extracting a scalar matrix, a diagonal compensation matrix, and a block $\tau$ matrix. The sum of these matrices was then employed to construct a class of $\tau$-splitting iteration methods.


For certain structured matrices, the multigrid   method exhibits excellent solution efficiency\cite{Ruge,Lin,GLT2026}.   Under suitable conditions, this method enjoys mesh‑independent convergence.
 This advantageous property has drawn the attention of many researchers to investigate the application of  multigrid methods to fractional partial differential equations. 
 For instance,  \cite{PKH} employed the two-grid method to solve the dense Toeplitz linear system obtained after discretizing the one-dimensional constant-coefficient fractional diffusion equation, and utilized fast Toeplitz matrix--vector multiplication to reduce the computational overhead .  \cite{Serra_2016} proved the convergence of the multigrid method for two-dimensional space-fractional diffusion equations with constant diffusion coefficients.  Based on the generalized locally Toeplitz (GLT) sequences theory, they further extended this result to the case of uniformly bounded and positive variable
 diffusion coefficients.  Furthermore, numerous studies adopt the GLT theory to analyze the spectral properties of coefficient matrices and thereby design multigrid methods\cite{1D_Riesz,2D_Riesz,Serra_2025}. In addition,  \cite{Bolten_2022,Bolten_2025} extended the
 symbol-based multigrid analysis for scalar Toeplitz matrices
 to block-circulant and block-Toeplitz systems generated by
 matrix-valued symbols, and derived sufficient conditions for
 the two-grid and V-cycle methods .
 
 The multigrid method consists of a smoother and a coarse-grid correction  that complement each other. The coarse-grid correction is responsible for eliminating the low-frequency components of the error. For Toeplitz-like matrices, coarse-grid correction using bilinear interpolation as the transfer operator is effective and general \cite{Moghaderi}. Correspondingly, the smoother should rapidly attenuate the high-frequency components of the error that are difficult for the coarse grid to represent, and it is usually implemented by classical iterative methods. The weighted Jacobi method is widely adopted due to its simplicity and low computational cost per iteration.  With an appropriate relaxation parameter, this method exhibits favorable smoothing ability for high-frequency components of the error\cite{Tro,XJC}. Therefore, the proper choice of the relaxation parameter is the key to constructing an efficient smoother.
 
  Previously, research on the selection of relaxation parameters for the weighted Jacobi smoother has mainly focused on classical elliptic partial differential equations\cite{HG,HE}. \cite{Serra_2025}extended this study to dense Toeplitz-type linear systems arising from the discretization of tempered fractional diffusion equations. They used the spectral information provided by the  generating function of the coefficient matrix to expand the range of parameters that guarantee the smoothing property of the Jacobi smoother,  and gave estimation formulas for suitable relaxation parameters. However, while their estimation formulas yield feasible relaxation parameters, the resulting parameter may not always match the numerically optimal choice for finite‑dimensional linear systems.
 
 In recent years, the integration of machine learning with traditional numerical methods has demonstrated great potential. Machine learning techniques can identify nonlinear relationships between problem characteristics and numerical algorithms from data that are difficult to characterize explicitly, thereby providing new approaches to the design of complex numerical algorithms and the automatic selection of their parameters.  Moreover, once trained, machine learning models can typically make rapid predictions for new problems at a low computational cost. In the field of multigrid methods,  \cite{HRu,Nature,Ll} have applied machine learning to the design and optimization of smoothers.  In this work, we adopt Gaussian Process Regression (GPR) to learn and select the relaxation parameters of the weighted Jacobi smoother, so as to improve the convergence speed of the multigrid method. 
 	
 GPR is a Bayesian nonparametric learning method that places a Gaussian process prior on the unknown mapping from problem features to the desired relaxation parameter. Given a small set of training samples, the correlations among the samples are characterized by a covariance kernel, and conditioning the prior on the observed data yields a posterior predictive distribution for a new problem instance. Its mean can be used as the predicted relaxation parameter, while its variance provides a quantitative measure of prediction uncertainty. It can capture nonlinear parameter dependence without prescribing a fixed functional form, perform well with relatively small datasets, and, once trained, deliver predictions at a low online cost.


 The paper is organized as follows. In Section \ref{Sec2},we discretize the original equations and analyze the basic spectral properties of the coefficient matrix. In Section \ref{TGM}, with the aid of the Ruge–Stuben two grid framework, we design and analyze the convergence of the two grid method. In Section \ref{GPR}, we introduce the principles of GPR and its procedure for predicting parameters. In Section \ref{Sec5}, we present numerical experiments to demonstrate the effectiveness and superiority of the multigrid method and GPR for parameter prediction. In Section \ref{conclusion}, we summarize the conclusions and future work.

\section{ Discretization and matrix properties}\label{Sec2}
In this section, we will introduce the specific discrete schemes and analyze the properties of the coefficient matrices obtained from the discretization, which will help us design the multigrid method.
Below, we review the left- and right-sided Riemann–Liouville fractional derivatives:
\begin{align*}
	{}_{x_{L} }D_x^{\gamma_{1}} u(x,y,t) &= \frac{1}{\Gamma(2-\gamma_{1})} \frac{d^2}{dx^2} \int_{x_{L}}^{x} \frac{u(\xi,y,t)}{(x-\xi)^{\gamma_{1}-1}} d\xi, & 
{}_x D_{x_{R}}^{\gamma_{1}}u(x,y,t) &= \frac{1}{\Gamma(2-\gamma_{1})} \frac{d^2}{dx^2} \int_{x}^{x_{R}} \frac{u(\xi,y,t)}{(\xi-x)^{\gamma_{1}-1}} d\xi, \\
		{}_{y_{L} }D_y^{\gamma_{2}} u(x,y,t) &= \frac{1}{\Gamma(2-\gamma_{2})} \frac{d^2}{dy^2} \int_{y_{L}}^{y} \frac{u(x,\xi,t)}{(y-\xi)^{\gamma_{2}-1}} d\xi, & 
{}_y D_{y_{R}}^{\gamma_{2}}u(x,y,t)&= \frac{1}{\Gamma(2-\gamma_{2})} \frac{d^2}{dy^2} \int_{y}^{y_{R}} \frac{u(x,\xi,t)}{(\xi-y)^{\gamma_{{2}}-1}} d\xi,
\end{align*}
where $\Gamma(\cdot)$ is the Gamma function. Let $N,M $ be positive integers. Take the step sizes $\Delta x=\frac{x_{R}-x_{L}}{N+1}$, $\Delta y=\frac{y_{R}-y_{L}}{N+1}$, $\Delta t=\frac{T}{M}$ to discretize the $x$-direction, $y$-direction and time, yielding
\begin{align*}
	&\{x_{i}| x_{i}=x_{L}+i\Delta x, 0\leq i\leq N+1 \},\\
	&\{y_{j}| y_{j}=y_{L}+j\Delta y, 0\leq j\leq N+1 \},\\
	&\{t_{m}| t_{m}=m\Delta t, 0\leq m\leq M \}.
\end{align*}
Define $u^{(m)}_{i,j}=u(x_{i},y_{j},t_{m})$, $f^{(m)}_{i,j}=f(x_{i},y_{j},t_{m})$, $d^{(m)}_{i,j}=d(x_{i},y_{j},t_{m})$. Using the shifted $\text{Gr\"unwald}$ formula\cite{Meer0,Meer2}, we can obtain the approximation of the fractional derivative at the point $(x_{i},y_{j},t_{m})$:
\begin{align*}
	&{}_{x_{L}}D^{\gamma_{1}}_{x}(x_{i},y_{j},t_{m})= \frac{1}{{\Delta x}^{\gamma_{1}}} \sum_{k=0}^{i+1}g^{(\gamma_{1})}_{k}u^{(m)}_{i-k+1,j}+\mathcal{O}(\Delta x),\\
	&{}_{x}D^{\gamma_{1}}_{x_{R}}(x_{i},y_{j},t_{m})= \frac{1}{{\Delta x}^{\gamma_{1}}} \sum_{k=0}^{N-i+2}g^{(\gamma_{1})}_{k}u^{(m)}_{i+k-1,j}+\mathcal{O}(\Delta x),
\end{align*}
and
\begin{align*}
	&{}_{y_{L}}D^{\gamma_{2}}_{y}(x_{i},y_{j},t_{m})= \frac{1}{\Delta_{y}^{\gamma_{2}}} \sum_{k=0}^{j+1}g^{(\gamma_{2})}_{k}u^{(m)}_{i,j-k+1}+\mathcal{O}(\Delta y),\\
	&{}_{y}D^{\gamma_{2}}_{y_{R}}(x_{i},y_{j},t_{m})= \frac{1}{\Delta_{y}^{\gamma_{2}}} \sum_{k=0}^{N-j+2}g^{(\gamma_{2})}_{k}u^{(m)}_{i,j+k-1}+\mathcal{O}(\Delta y),
\end{align*}
where the coefficients $g^{(\gamma)}_{k}$ are given by the following lemma:

\begin{lem} \label{yl1}\rm  \cite{Lu}
For $ \gamma \in (1,2) $, the following recurrence relation holds:
\begin{align*}
g_k^{(\gamma)} = \left(1 - \frac{\gamma + 1}{k}\right) g_{k-1}^{(\gamma)}=\frac{(-1)^{k}}{k!}{\gamma(\gamma -1)\cdots(\gamma -k+1)}, \quad k = 1, 2, \cdots.
\end{align*}
Moreover, we have
\begin{align*}
g_0^{(\gamma)} = 1, \quad g_1^{(\gamma)} = -\gamma < 0, \quad 1 > g_2^{(\gamma)} > g_3^{(\gamma)} > \cdots > 0,
\end{align*}
and
\begin{align*}
	\sum_{k=0}^{\infty} g_k^{(\gamma)} = 0,
	\quad \text{or equivalently,} \quad
	\gamma = \sum_{\substack{k=0 \\ k \neq 1}}^{\infty} g_k^{(\gamma)}.
\end{align*}

\end{lem}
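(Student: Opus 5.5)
The plan is to derive everything from the defining formula $g_k^{(\gamma)}=(-1)^k\binom{\gamma}{k}$, i.e.\ the coefficients of the binomial series $(1-z)^{\gamma}=\sum_{k\ge 0} g_k^{(\gamma)} z^k$. First I obtain the recurrence by taking the ratio of consecutive terms:
\begin{align*}
\frac{g_k^{(\gamma)}}{g_{k-1}^{(\gamma)}}=-\frac{\gamma-k+1}{k}=\frac{k-\gamma-1}{k}=1-\frac{\gamma+1}{k},
\end{align*}
and $g_0^{(\gamma)}=1$ is the empty product. Setting $k=1$ gives $g_1^{(\gamma)}=-\gamma<0$. Setting $k=2$ gives $g_2^{(\gamma)}=\gamma(\gamma-1)/2$, and since $1<\gamma<2$ this lies in $(0,1)$.

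For $k\ge 3$ I use the recurrence. Because $\gamma\in(1,2)$, we have $0<1-\frac{\gamma+1}{k}<1$ exactly when $k>\gamma+1$. Every $k\ge 3$ satisfies this, since $\gamma+1<3$. So starting from $g_2^{(\gamma)}>0$ and inducting on $k$, each $g_k^{(\gamma)}$ is positive and strictly smaller than $g_{k-1}^{(\gamma)}$. This gives $1>g_2^{(\gamma)}>g_3^{(\gamma)}>\cdots>0$.

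For the sum identity I evaluate the generating function at $z=1$, where formally $(1-1)^\gamma=0$. This is the one step needing care, because $z=1$ lies on the boundary of the disk of convergence. I handle it with a convergence argument followed by Abel's theorem. For convergence, note that the terms $g_k^{(\gamma)}$ with $k\ge2$ are positive. Their partial sums are bounded, since for $0<z<1$,
\begin{align*}
\sum_{k=2}^{K} g_k^{(\gamma)} z^k\le (1-z)^\gamma-1+\gamma z\le \gamma ,
\end{align*}
and letting $z\to1^-$ gives $\sum_{k=2}^K g_k^{(\gamma)}\le\gamma-1$. A monotone bounded series converges, so $\sum_k g_k^{(\gamma)}$ converges. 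Abel's theorem then yields $\sum_k g_k^{(\gamma)}=\lim_{z\to1^-}(1-z)^\gamma=0$. (Alternatively, $g_k^{(\gamma)}\sim k^{-\gamma-1}/|\Gamma(-\gamma)|$, which is summable since $\gamma+1>2$.)

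Finally, moving $g_1^{(\gamma)}=-\gamma$ to the other side of $\sum_k g_k^{(\gamma)}=0$ gives $\gamma=\sum_{k\neq1}g_k^{(\gamma)}$. The main obstacle is justifying the boundary evaluation at $z=1$, and the positivity and boundedness argument above settles it.
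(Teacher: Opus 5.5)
Your proof is correct and complete. The paper gives no proof of this lemma and simply imports it from \cite{Lu}, so there is no argument of the paper's to compare against.

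Your derivation is the standard one behind that citation:
\begin{itemize}
\item you identify $g_k^{(\gamma)}$ as the coefficients of $(1-z)^{\gamma}$;
\item you read off the recurrence $g_k^{(\gamma)}=-\frac{\gamma-k+1}{k}\,g_{k-1}^{(\gamma)}$;
\item you get positivity and strict decrease for $k\ge 3$ from $k>\gamma+1$;
\item you evaluate at the boundary point $z=1$ by Abel's theorem, with convergence secured by the positivity of the terms for $k\ge2$.
\end{itemize}
This last step is the only delicate point, and you handle it properly.

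One small streamlining is available. Since $g_k^{(\gamma)}z^k\uparrow g_k^{(\gamma)}$ as $z\uparrow 1$ for every $k\ge 2$, monotone convergence for nonnegative series gives $\sum_{k\ge2}g_k^{(\gamma)}=\lim_{z\to1^-}\bigl((1-z)^{\gamma}-1+\gamma z\bigr)=\gamma-1$ directly. This yields both convergence and the value in one step, without the general Abel theorem.
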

To facilitate the subsequent analysis, we introduce a matrix $T^{\gamma}_{N}\in \mathbb R^{N\times N}$ associated with $\gamma$:
\begin{equation}
	T^{\gamma}_{N} = -\left[
	\begin{array}{cccccc}
		2 g_{1}^{(\gamma)} & g_{0}^{(\gamma)}+g_{2}^{(\gamma)} & g_{3}^{(\gamma)} & \cdots & g_{N-1}^{(\gamma)} & g_{N}^{(\gamma)} \\
		g_{0}^{(\gamma)}+g_{2}^{(\gamma)} & 2 g_{1}^{(\gamma)} & g_{0}^{(\gamma)}+g_{2}^{(\gamma)} & \ddots & \cdots & g_{N-1}^{(\gamma)} \\
		g_{3}^{(\gamma)} & g_{0}^{(\gamma)}+g_{2}^{(\gamma)} & 2 g_{1}^{(\gamma)} & \ddots & \ddots & \vdots \\
		\vdots & \ddots & \ddots & \ddots & \ddots & g_{3}^{(\gamma)} \\
		g_{N-1}^{(\gamma)} & \cdots & \ddots & \ddots & 2 g_{1}^{(\gamma)} & g_{0}^{(\gamma)}+g_{2}^{(\gamma)} \\
		g_{N}^{(\gamma)} & g_{N-1}^{(\gamma)} & \cdots & g_{3}^{(\gamma)} & g_{0}^{(\gamma)}+g_{2}^{(\gamma)} & 2 g_{1}^{(\gamma)}
	\end{array}
	\right]. \label{jz1}
\end{equation}
According to Lemma \ref{yl1}, $T^{\gamma}_{N}$ is a matrix that is strictly diagonally dominant, symmetric, and has all positive diagonal entries. Therefore, $T^{\gamma}_{N}$ is a symmetric positive definite matrix. Let
\begin{align*}
	\textbf{u}^{(m)}&=[u^{(m)}_{1,1},u^{(m)}_{2,1} ,\dots,u^{(m)}_{N,1},u^{(m)}_{1,2},u^{(m)}_{2,2},\dots,u^{(m)}_{N,2},\dots,u^{(m)}_{1,N},u^{(m)}_{2,N},\dots,u^{(m)}_{N,N}]^{T},\\  
	\textbf{f}^{(m)}&=[f^{(m)}_{1,1},f^{(m)}_{2,1} ,\dots,f^{(m)}_{N,1},f^{(m)}_{1,2},f^{(m)}_{2,2},\dots,f^{(m)}_{N,2},\dots,f^{(m)}_{1,N},f^{(m)}_{2,N},\dots,f^{(m)}_{N,N}]^{T},\\
	{D}^{(m)}&={\text{diag}}(d^{(m)}_{1,1},d^{(m)}_{2,1} ,\dots,d^{(m)}_{N,1},d^{(m)}_{1,2},d^{(m)}_{2,2},\dots,d^{(m)}_{N,2},\dots,d^{(m)}_{1,N},d^{(m)}_{2,N},\dots,d^{(m)}_{N,N}).
\end{align*}
We discretize (\ref{new1}) in the spatial direction using the shifted Grunwald formula and in the temporal direction using the backward Euler scheme to obtain the implicit finite-difference scheme:
\begin{small}
	\begin{align*}
		d^{(m)}_{i,j}\frac{u_{i,j}^{(m)}-u_{i,j}^{(m-1)}}{\Delta t}-\frac{\sigma_{\gamma_{1}}}{{\Delta x}^{\gamma_{1}}}\left(    \sum_{k=0}^{i+1}g^{(\gamma_{1})}_{k}u^{(m)}_{i-k+1,j}+\sum_{k=0}^{N-i+2}g^{(\gamma_{1})}_{k}u^{(m)}_{i+k-1,j}         \right)-\frac{\sigma_{\gamma_{2}}}{{\Delta y}^{\gamma_{2}}}\left(    \sum_{k=0}^{j+1}g^{(\gamma_{2})}_{k}u^{(m)}_{i,j-k+1}+\sum_{k=0}^{N-j+2}g^{(\gamma_{2})}_{k}u^{(m)}_{i,j+k-1}         \right)=f^{(m)}_{i,j}.
	\end{align*}
\end{small}
Furthermore, we have
\begin{align*}
	{D}^{(m)}\frac{\textbf{u}^{(m)}-\textbf{u}^{(m-1)}}{\Delta t}+\left(\frac{\sigma_{\gamma_{1}}}{{\Delta x}^{\gamma_{1}}}I_{N}\otimes T_{\gamma_{1}}   +\frac{\sigma_{\gamma_{2}}}{{\Delta y}^{\gamma_{2}}}T_{\gamma_{2}}\otimes I_{N}              \right)\textbf{u}^{(m)}=\textbf{f}^{(m)}.
\end{align*}
Let $\eta_{1}=\sigma_{\gamma_{1}}\frac{\Delta t}{{\Delta x}^{\gamma_{1}}},\eta_{2}=\sigma_{\gamma_{2}}\frac{\Delta t}{{\Delta y}^{\gamma_{2}}}$. Consequently, we obtain
\begin{align}
	\left(	{D}^{(m)}+\eta_{1}I_{N}\otimes T^{\gamma_{1}}_{N}+\eta_{2} T^{\gamma_{2}}_{N}\otimes I_{N}\right)\textbf{u}^{(m)}=\Delta t \textbf {f}^{(m)}+{D}^{(m)}\textbf {u}^{(m-1)}. \label{eq2}
\end{align}
Dividing both sides of (\ref{eq2}) by $\eta_{1}$, we have
\begin{align*}
	\left(\frac{1}{\eta_{1}}	D^{(m)}+I_{N}\otimes T^{\gamma_{1}}_{N}+\frac{\eta_{2}}{\eta_{1}} T^{\gamma_{2}}_{N}\otimes I_{N}\right)\textbf{u}^{(m)}=\frac{1}{\eta_{1}}\Delta t \textbf {f}^{(m)}+\frac{1}{\eta_{1}}D^{(m)}\textbf {u}^{(m-1)}. 
\end{align*}
Define
\begin{align}
	&A_{N}^{(m)}=\frac{1}{\eta_{1}}	D^{(m)}+I_{N}\otimes T^{\gamma_{1}}_{N}+\frac{\eta_{2}}{\eta_{1}} T^{\gamma_{2}}_{N}\otimes I_{N},\notag\\
	&\textbf{b}^{(m)}=\frac{1}{\eta_{1}}\Delta t \textbf {f}^{(m)}+\frac{1}{\eta_{1}}D^{(m)}\textbf {u}^{(m-1)}.\label{ANm}
\end{align}
Then (\ref{eq2}) is equivalent to
\begin{align}
	A_{N}^{(m)}\textbf{u}^{(m)}=\mathbf{b}^{(m)},\qquad 1\leq m \leq M. \label{eq5}
\end{align}
After fixing the partition number $N$, discretizing \eqref{new1} yields a set of matrices $\{A^{(m)}_{N}\}$. When the  coefficient $d(x,y,t)=0$, $D^{(m)}$ degenerates to the zero matrix, and in this case, $A^{(m)}_{N}$ is independent of the time level $m$ and is denoted by $A_{N}$. The matrix $A_{N}$ is symmetric positive definite  and strictly diagonally dominant, and it is a Block-Toeplitz-Toeplitz-Block (BTTB) matrix.

 BTTB matrices belong to the class of multilevel Toeplitz matrices, and their spectral information is closely related to the generating function of the corresponding BTTB matrix sequence. Let us now recall the definitions of Toeplitz matrix sequences and BTTB matrix sequences.


\begin{definition}\rm \cite{Moghaderi}   Let $f(\theta)\in L^{1}([-\pi,\pi])$ and let $\{f_{k}\}$ be the Fourier coefficients defined as
	\begin{align*}
		f_{{k}} := \frac{1}{2\pi} \int_{[-\pi,\pi]} f({\theta}) {e}^{-{\mathbf{i}} k\theta} \mathrm{d}{\theta},
		\end{align*}
where $\mathbf{i}^{2}=-1$. Then the Toeplitz matrix of order $n$ associated with $f(\theta)$ is
\begin{align*}
	T_{n}(f)=[f_{i-j}]_{i,j=1}^{n}.
\end{align*}	
The function $f(\theta)$ is called the generating  function of the matrix sequence $\{T_n(f)\}_{n\in \mathbb N}$.

	\end{definition}

\begin{definition}\rm  \cite{Moghaderi} Let $f(\theta_{1},\theta_{2})\in L^{1}([-\pi,\pi]^{2})$ and let $\{f_{[k_{1},k_{2}]}\}$ be the Fourier coefficients defined as
	\begin{align*}
		f_{[k_{1},k_{2}]} = \frac{1}{4\pi^2} \int_{-\pi}^{\pi} \int_{-\pi}^{\pi} f(\theta_{1},\theta_{2}) e^{-\mathbf{i}(k_{1}\theta_{1} + k_{2}\theta_{2})} \, \mathrm{d}\theta_{1} \, \mathrm{d}\theta_{2}. 
	\end{align*}
	Then the Block-Toeplitz-Toeplitz-Blocks matrix of order $n=n_{1}n_{2}$ associated with $f(\theta_{1},\theta_{2})$ is
	\begin{align*}
T_{n}^{(2)}(f)=\left[\left[f_{[i_{1}-j_{1},\,i_{2}-j_{2}]}\right]_{i_{2},j_{2}=1}^{n_{2}}\right]_{i_{1},j_{1}=1}^{n_{1}}.
\end{align*}
Accordingly, $f(\theta_{1},\theta_{2})$ is called the generating  function of matrix sequence $\{T^{(2)}_n(f)\}_{n\in \mathbb N}$.  Moreover, $T^{(2)}_{n}(f)$  also admits the following representation:
\begin{align}
	T^{(2)}_{n}(f)=\underset{|k_{1}|< n_{1}}\sum\underset{|k_{2}|< n_{2}}\sum f_{[k_{1},k_{2}]}J^{[k_{1}]}_{n_{1}}\otimes J^{[k_{2}]}_{n_{2}}, \label{bttbdy}
\end{align}
	where $J^{[k]}_{n}$ is the matrix of order $n$ whose $(i,j)$ entry equals $1$ if $i-j=k$ and zero otherwise.
\end{definition}

In order to obtain the generating function of  $\{A_N\}_{N\in\mathbb N}$, we first give the following lemma.
\begin{lem}\rm \cite{PKH_2016,Serra_2016} Let $\gamma \in (1,2)$.  The  generating function of $\{T^{\gamma}_{N}\}_{N\in \mathbb N}$ in (\ref{jz1}) is real-valued and even, and its  specific form is as follows:
	\begin{align}
		f_{\gamma}(\theta)=-2^{\gamma +1}(\sin \frac{|\theta|}{2})^{\gamma}\cos (|\theta|(1-\frac{\gamma}{2})+\frac{\gamma \pi }{2}).\label{fgamma}
	\end{align}
\end{lem}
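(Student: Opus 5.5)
The plan is to compute the generating function directly from the Fourier series of the symbol of the one-sided Grünwald operator. By \eqref{jz1}, $T^{\gamma}_{N}$ is the Toeplitz matrix whose $k$-th diagonal entry is $t_k=-(g^{(\gamma)}_{|k|+1})$ for $|k|\ge 2$, while $t_{\pm1}=-(g^{(\gamma)}_0+g^{(\gamma)}_2)$ and $t_0=-2g^{(\gamma)}_1$. This is exactly the sum of a lower Hessenberg Toeplitz matrix with entries $-g^{(\gamma)}_{k+1}$ on the $k$-th subdiagonal ($k\ge -1$) and its transpose. Hence the candidate generating function is
\begin{align*}
f_{\gamma}(\theta)=-\sum_{k=-1}^{\infty} g^{(\gamma)}_{k+1}\left(e^{\mathbf{i}k\theta}+e^{-\mathbf{i}k\theta}\right)
=-\left(e^{-\mathbf{i}\theta}\sum_{j=0}^{\infty}g^{(\gamma)}_j e^{\mathbf{i}j\theta}+e^{\mathbf{i}\theta}\sum_{j=0}^{\infty}g^{(\gamma)}_j e^{-\mathbf{i}j\theta}\right).
\end{align*}
Lemma \ref{yl1} gives $g^{(\gamma)}_j=(-1)^j\binom{\gamma}{j}$, so each inner series is the binomial expansion $(1-e^{\pm\mathbf{i}\theta})^{\gamma}$.

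The first step is to justify absolute convergence, so that $f_\gamma\in L^1$ (indeed continuous) and its Fourier coefficients are exactly the $t_k$. By Lemma \ref{yl1}, $g^{(\gamma)}_j>0$ for $j\ge2$ and $\sum_j g^{(\gamma)}_j=0$, so $\sum_j|g^{(\gamma)}_j|=2\gamma<\infty$. The series therefore converge uniformly on $[-\pi,\pi]$, and Abel's theorem identifies the sum with $(1-e^{\pm\mathbf{i}\theta})^{\gamma}$ using the principal branch, including at $\theta=0$. Since the coefficients are real and the two terms are complex conjugates, $f_\gamma(\theta)=-2\,\mathrm{Re}\big(e^{-\mathbf{i}\theta}(1-e^{\mathbf{i}\theta})^{\gamma}\big)$. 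This expression is real and even, because replacing $\theta$ by $-\theta$ conjugates the inner quantity.

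The second step is the explicit evaluation, which I expect to be the only delicate point because of the branch of the power. For $\theta\in(0,\pi]$, write $1-e^{\mathbf{i}\theta}=-2\mathbf{i}\sin(\theta/2)e^{\mathbf{i}\theta/2}=2\sin(\theta/2)e^{\mathbf{i}(\theta-\pi)/2}$. Here the argument $(\theta-\pi)/2\in(-\pi/2,0]$ lies in the principal range, so
\begin{align*}
(1-e^{\mathbf{i}\theta})^{\gamma}=2^{\gamma}\left(\sin\tfrac{\theta}{2}\right)^{\gamma}e^{\mathbf{i}\gamma(\theta-\pi)/2}.
\end{align*}
Multiplying by $e^{-\mathbf{i}\theta}$ and taking the real part gives $2^{\gamma}(\sin\frac{\theta}{2})^{\gamma}\cos\!\big(\theta(\frac{\gamma}{2}-1)-\frac{\gamma\pi}{2}\big)$. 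By evenness of cosine this equals $2^{\gamma}(\sin\frac{\theta}{2})^{\gamma}\cos\!\big(\theta(1-\frac{\gamma}{2})+\frac{\gamma\pi}{2}\big)$, and so $f_\gamma(\theta)=-2^{\gamma+1}(\sin\frac{\theta}{2})^{\gamma}\cos(\theta(1-\frac{\gamma}{2})+\frac{\gamma\pi}{2})$.

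Evenness then extends the formula to $\theta\in[-\pi,0)$ with $\theta$ replaced by $|\theta|$. At $\theta=0$ both sides vanish, consistent with $\sum_j g^{(\gamma)}_j=0$. This yields \eqref{fgamma}.
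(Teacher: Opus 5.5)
Your proposal is correct and follows essentially the same route as the paper. You write $T^{\gamma}_N$ as the shifted Grünwald part plus its transpose, identify each series as $e^{\mp\mathbf{i}\theta}(1-e^{\pm\mathbf{i}\theta})^{\gamma}$, and evaluate via $1-e^{\mathbf{i}\theta}=2\sin(\theta/2)e^{\mathbf{i}(\theta-\pi)/2}$. The differences are minor: you reduce to $-2\,\mathrm{Re}\bigl(e^{-\mathbf{i}\theta}(1-e^{\mathbf{i}\theta})^{\gamma}\bigr)$ and use evenness instead of computing the case $\theta<0$ separately, and you add the absolute-convergence and Abel's-theorem justification for the principal branch, which the paper simply takes from \cite{Serra_2016}.
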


	\begin{proof}

		It is noted that the  generating function of $\{T^{\gamma}_{N}\}_{N\in \mathbb N}$ is
		\begin{align*}
			f_{\gamma}(\theta)=\sum_{k=-\infty}^{\infty}f_{k}e^{\mathbf{i}k\theta},
		\end{align*}
		where the coefficients are given by
		\begin{align*}
		f_{0}=-2g^{(\gamma)}_{1},\qquad
		f_{1}=f_{-1}=-g^{(\gamma)}_{0}-g^{(\gamma)}_{2},\qquad
		f_{k}=f_{-k}=-g^{(\gamma)}_{k+1},\quad k\geq 2.
		\end{align*}	
Furthermore, we obtain that
		\begin{align*}
			\sum_{k=-\infty}^{\infty}f_{k}e^{\mathbf{i}k\theta}=-\sum_{k=-1}^{\infty} g^{(\gamma)}_{k+1}e^{\mathbf{i}k\theta}-\sum_{k=-1}^{\infty} g^{(\gamma)}_{k+1}e^{-\mathbf{i}k\theta}.
		\end{align*}
 By virtue of the results in \cite{Serra_2016}, we have
	\begin{align*}
	-\sum_{k=-1}^{\infty} g^{(\gamma)}_{k+1}e^{\mathbf{i}k\theta}=-e^{-\mathbf{i}\theta}(1-e^{\mathbf{i}\theta})^{\gamma},	
	\end{align*}	
	and
	\begin{align*}
		-\sum_{k=-1}^{\infty} g^{(\gamma)}_{k+1}e^{-\mathbf{i}k\theta}=-e^{\mathbf{i}\theta}(1-e^{-\mathbf{i}\theta})^{\gamma}.	
	\end{align*}	
	Owing to
		\[
		\begin{minipage}{0.48\textwidth}
			\[
			1-e^{\mathbf{i}\theta}
			=
			\begin{cases}
				2\left|\sin\dfrac{\theta}{2}\right|\,
				e^{\frac{\mathbf{i}}{2}(\theta-\pi)},
				& 0\leq \theta\leq \pi, \\[0.8em]
				2\left|\sin\dfrac{\theta}{2}\right|\,
				e^{\frac{\mathbf{i}}{2}(\theta+\pi)},
				& -\pi\leq\theta<0,
			\end{cases}
			\]
		\end{minipage}
		\hfill {\text{and}}
		\begin{minipage}{0.48\textwidth}
			\[
			1-e^{-\mathbf{i}\theta}
			=
			\begin{cases}
				2\left|\sin\dfrac{\theta}{2}\right|\,
				e^{-\frac{\mathbf{i}}{2}(\theta-\pi)},
				& 0\leq \theta\leq\pi, \\[0.8em]
				2\left|\sin\dfrac{\theta}{2}\right|\,
				e^{-\frac{\mathbf{i}}{2}(\theta+\pi)},
				& -\pi\leq\theta<0.
			\end{cases}
			\]
		\end{minipage}
		\]
	When $0\leq \theta \leq\pi$,
	\begin{align*}
		f_{\gamma}(\theta)&=-e^{-\mathbf{i}\theta}(1-e^{\mathbf{i}\theta})^{\gamma}-e^{\mathbf{i}\theta}(1-e^{-\mathbf{i}\theta})^{\gamma}\\
		&=-e^{-\mathbf{i}\theta}2^{\gamma} \left|\sin^{\gamma}\dfrac{\theta}{2}\right|	e^{\frac{\mathbf{i}\gamma}{2}(\theta-\pi)}-e^{\mathbf{i}\theta}2^{\gamma} \left|\sin^{\gamma}\dfrac{\theta}{2}\right|	e^{\frac{-\mathbf{i}\gamma}{2}(\theta-\pi)}\\
		&=-2^{\gamma} \left|\sin^{\gamma}\dfrac{\theta}{2}\right|\left(  e^{\mathbf{i}\left(\theta(\frac{\gamma}{2}-1)-\frac{\pi \gamma}{2} \right)}+  e^{\mathbf{i}\left(-\theta(\frac{\gamma}{2}-1)+\frac{\pi \gamma}{2} \right)}                 \right)\\
		&=-2^{\gamma+1} \left|\sin^{\gamma}\dfrac{\theta}{2}\right|\cos\left(\theta(1-\frac{\gamma}{2})+\frac{\pi \gamma}{2}  \right).
	\end{align*}			
	When $-\pi< \theta \leq0$,		
		\begin{align*}
			f_{\gamma}(\theta)&=-2^{\gamma} \left|\sin^{\gamma}\dfrac{\theta}{2}\right|\left(   e^{\mathbf{i}\left(\theta(\frac{\gamma}{2}-1)+\frac{\pi \gamma}{2} \right)}+  e^{\mathbf{i}\left(-\theta(\frac{\gamma}{2}-1)-\frac{\pi \gamma}{2} \right)}       \right)\\
			&=-2^{\gamma+1} \left|\sin^{\gamma}\dfrac{\theta}{2}\right|\cos\left(-\theta(1-\frac{\gamma}{2})+\frac{\pi \gamma}{2}  \right).
		\end{align*}
		Combining the two gives (\ref{fgamma}). 
		
		\noindent The Proof is completed.
	\end{proof}

With the above preparations, we obtain the generating function of the BTTB matrix sequence  $\{A_N\}_{N\in\mathbb N}$.
\begin{thm}\rm
	Let $\gamma_{1},\gamma_{2}\in (1,2)$. Then $A_{N}$ is the BTTB matrix given by $A_{N}=T^{(2)}_{n}(f_{\gamma_{1},\gamma_{2}})$ with
	\begin{align}
		f_{\gamma_{1},\gamma_{2}}(\theta_{1},\theta_{2})=f_{\gamma_{1}}(\theta_{1})+\frac{\eta_{2}}{\eta_{1}}f_{\gamma_{2}}(\theta_{2}),\label{thm3.1}
	\end{align}
	where $\eta_{1}=\sigma_{\gamma_{1}}\frac{\Delta t}{\Delta x^{\gamma_{1}}},\quad \eta_{2}=\sigma_{\gamma_{2}}\frac{\Delta t}{\Delta y^{\gamma_{2}}}$, and $f_{\gamma}(\theta)$ is defined in (\ref{fgamma}).
\end{thm}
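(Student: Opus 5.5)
The plan is to use linearity of the map $f\mapsto T^{(2)}_n(f)$ together with the Kronecker representation (\ref{bttbdy}). Since $d=0$, we have $A_N=I_N\otimes T^{\gamma_1}_N+\frac{\eta_2}{\eta_1}T^{\gamma_2}_N\otimes I_N$. It therefore suffices to identify each Kronecker term as a BTTB matrix generated by a function of a single variable, and then add the two.

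\textbf{Step 1.} By the preceding lemma, $T^{\gamma}_N=T_N(f_\gamma)$, so $T^{\gamma}_N=\sum_{|k|<N} f_k J^{[k]}_N$. Here $f_k$ are the Fourier coefficients of $f_\gamma$ computed in that proof.

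\textbf{Step 2.} Consider a function $g(\theta_1,\theta_2)=h(\theta_2)$ depending only on $\theta_2$. Its two-dimensional Fourier coefficients are
\begin{align*}
g_{[k_1,k_2]}=\delta_{k_1,0}\,h_{k_2},
\end{align*}
because $\frac{1}{2\pi}\int_{-\pi}^{\pi}e^{-\mathbf{i}k_1\theta_1}\,\mathrm{d}\theta_1=\delta_{k_1,0}$. Substituting into (\ref{bttbdy}) and using $J^{[0]}_{N}=I_N$ gives $T^{(2)}_n(g)=I_N\otimes T_N(h)$. Symmetrically, if $g(\theta_1,\theta_2)=h(\theta_1)$, then $T^{(2)}_n(g)=T_N(h)\otimes I_N$.

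\textbf{Step 3.} Combine Steps 1 and 2 by linearity to conclude $A_N=T^{(2)}_n(f_{\gamma_1,\gamma_2})$. Each summand in (\ref{thm3.1}) lies in $L^1$, since $f_\gamma$ is bounded, so the integrals are well defined.

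The main obstacle is bookkeeping rather than analysis. We must match the ordering convention of the paper's BTTB definition, where the outer index is $(i_1,j_1)$ and the Kronecker factor with $n_1$ comes first, against the lexicographic ordering of $\mathbf{u}^{(m)}$, where $x$ varies fastest. In that ordering, $I_N\otimes T^{\gamma_1}_N$ acts on the inner (fast) index, while $T^{\gamma_2}_N\otimes I_N$ acts on the outer index.

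Read literally, (\ref{bttbdy}) pairs the $x$-operator with $\theta_2$. To reconcile this with (\ref{thm3.1}), I would use one of two fixes. The first is to state that the first and second variables in (\ref{thm3.1}) refer to the $x$- and $y$-directions respectively, reindexing $(\theta_1,\theta_2)$ accordingly. The second is to note that a permutation similarity (the perfect shuffle) swaps the Kronecker factors. Either way, this leaves the claimed generating function unchanged up to relabeling of the variables.
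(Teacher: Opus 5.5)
Your proposal is correct and follows essentially the same route as the paper: you split $A_N$ into its two Kronecker terms, identify each as a BTTB matrix whose symbol depends on a single variable (Fourier coefficients supported on $k_1=0$ or on $k_2=0$), and add by linearity. Your ordering caveat is well founded and is more careful than the paper's own proof, which asserts that $T_N(1)\otimes T_N(f_{\gamma_1})$ has coefficients $(f_{\gamma_1})_{k_1}$ at $k_2=0$, whereas \eqref{bttbdy} read literally gives $(f_{\gamma_1})_{k_2}$ at $k_1=0$; so the identity holds only after relabeling $(\theta_1,\theta_2)$ or conjugating by the perfect shuffle, and this is harmless for the later eigenvalue and approximation arguments because the range of the symbol, $2-\cos\theta_1-\cos\theta_2$, and $P\otimes P$ are all invariant under the swap.
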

\begin{proof}
	By the definition of $A_{N}$, we have
	\begin{align*}
	A_{N}=I_{N}\otimes T_{\gamma_{1}}+\frac{\eta_{2}}{\eta_{1}} T_{\gamma_{2}}\otimes I_{N},
	\end{align*}
	where
	\begin{align*}
		&I_{N}\otimes T_{\gamma_{1}}= T_{N}(1)\otimes T_{N}(f_{\gamma_{1}})=T^{(2)}_{n}(\widetilde f_{\gamma_{1}}),\\
		&T_{\gamma_{2}}\otimes I_{N}= T_{N}(f_{\gamma_{2}})\otimes T_{N}(1)=T^{(2)}_{n}(\widetilde f_{\gamma_{2}}).
	\end{align*}
		Moreover, the Fourier coefficients of $\widetilde f_{\gamma_{1}}$ and
	$\widetilde f_{\gamma_{2}}$ are given by
	\begin{align*}
		\left(\widetilde{f}_{\gamma_{1}}\right)_{[k_{1},k_{2}]}
		&=
		\begin{cases}
			(f_{\gamma_{1}})_{k_{1}},
			& k_{2}=0, \\[0.8em]
			0,
			& k_{2}\neq 0,
		\end{cases}
		\quad \text{and} \quad
		\left(\widetilde{f}_{\gamma_{2}}\right)_{[k_{1},k_{2}]}
		=
		\begin{cases}
			(f_{\gamma_{2}})_{k_{2}},
			& k_{1}=0, \\[0.8em]
			0,
			& k_{1}\neq 0.
		\end{cases}
	\end{align*}
	Hence, $\widetilde{f}_{\gamma_{1}}(\theta_{1},\theta_{2})=f_{\gamma_{1}}(\theta_{1})$  and $\widetilde{f}_{\gamma_{2}}(\theta_{1},\theta_{2})=f_{\gamma_{2}}(\theta_{2})$. 	Therefore, the  generating function of $\{A_{N}\}_{N\in \mathbb N}$ is
		\begin{align*}
		f_{\gamma_{1},\gamma_{2}}(\theta_{1},\theta_{2})=f_{\gamma_{1}}(\theta_{1})+\frac{\eta_{2}}{\eta_{1}}f_{\gamma_{2}}(\theta_{2}).
	\end{align*}
	The proof is completed.	
\end{proof}

\begin{rmk}\rm If a bivariate function \(f(\theta_1,\theta_2)\) satisfies \(f(\theta_1,\theta_2)=f(|\theta_1|,|\theta_2|)\), then \(f(\theta_1,\theta_2)\) is called an even function. It is easy to verify that \(f_{\gamma_1,\gamma_2}(\theta_{1},\theta_{2})\) is a bivariate continuous real-valued even function.
	\end{rmk}


For a real-valued continuous even function $f(\theta_1,\theta_2)$, the associated BTTB matrix $T_n^{(2)}(f)$ is real symmetric. To characterize the spectrum of the matrices generated by $f_{\gamma_1,\gamma_2}(\theta_{1},\theta_{2})$, we recall the following standard result, which bounds the eigenvalues in terms of the minimum and maximum of the generating function:

\begin{lem}\rm \cite{SHW_2001} \label{lm3.2} 
	Let $f(\theta_{1},\theta_{2})\in C^{1}([-\pi,\pi]^{2})$ be a real-valued  even function , with its maximum and minimum denoted by $f_{\max}$ and $f_{\min}$, respectively. Then $T^{(2)}_{n}(f)$ is a real symmetric matrix, and its eigenvalues are bounded by the range of $f(\theta_{1},\theta_{2})$ in the sense that
	\begin{equation}
		f_{\min} \leq \lambda_{\min}\left(T^{(2)}_{n}(f)\right) \le \lambda_{\max}\left(T^{(2)}_{n}(f)\right) \leq f_{\max}.
	\end{equation}
\end{lem}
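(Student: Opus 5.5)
The proof rests on the Rayleigh quotient characterization of the extreme eigenvalues of a real symmetric matrix, combined with the representation (\ref{bttbdy}). First, symmetry. Since $f$ is real-valued and even in the sense of the preceding remark, its Fourier coefficients are real and satisfy
\[
f_{[k_1,k_2]}=f_{[-k_1,k_2]}=f_{[k_1,-k_2]}=f_{[-k_1,-k_2]}.
\]
Reality follows from $f_{[-k_1,-k_2]}=\overline{f_{[k_1,k_2]}}$ together with the invariance of $f$ under $\theta\mapsto-\theta$. Because $(J^{[k]}_{n})^{T}=J^{[-k]}_{n}$, each term $f_{[k_1,k_2]}J^{[k_1]}_{n_1}\otimes J^{[k_2]}_{n_2}$ is paired with its transpose $f_{[-k_1,-k_2]}J^{[-k_1]}_{n_1}\otimes J^{[-k_2]}_{n_2}$, which carries the same coefficient. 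Hence $T^{(2)}_{n}(f)$ is real symmetric, and $\lambda_{\min}$ and $\lambda_{\max}$ are the minimum and maximum of $x^{*}T^{(2)}_{n}(f)x$ over unit vectors $x\in\mathbb C^{n}$.

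The key step is an integral representation of the quadratic form. Index $x=(x_{j_1,j_2})$ consistently with the block structure and define the bivariate trigonometric polynomial
\[
p_x(\theta_1,\theta_2)=\sum_{j_1=1}^{n_1}\sum_{j_2=1}^{n_2}x_{j_1,j_2}e^{\mathbf{i}(j_1\theta_1+j_2\theta_2)}.
\]
Substituting the definition of $f_{[i_1-j_1,\,i_2-j_2]}$ into $x^{*}T^{(2)}_{n}(f)x=\sum \overline{x_{i_1,i_2}}\,f_{[i_1-j_1,i_2-j_2]}\,x_{j_1,j_2}$ and exchanging the finite sum with the integral gives
\[
x^{*}T^{(2)}_{n}(f)x=\frac{1}{4\pi^{2}}\int_{-\pi}^{\pi}\int_{-\pi}^{\pi}f(\theta_1,\theta_2)\,|p_x(\theta_1,\theta_2)|^{2}\,\mathrm{d}\theta_1\,\mathrm{d}\theta_2 .
\]
The sign convention has to be fixed here: depending on whether the row index enters with $e^{-\mathbf{i}}$ or $e^{\mathbf{i}}$, the polynomial may come out as $p_x$ or as its conjugate. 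Either way the integrand contains $|p_x|^{2}$, which is all that matters. Parseval's identity, via orthonormality of $\{e^{\mathbf{i}(j_1\theta_1+j_2\theta_2)}\}$ in $L^{2}([-\pi,\pi]^2)$ normalized by $1/(4\pi^{2})$, then gives
\[
\frac{1}{4\pi^{2}}\iint|p_x|^{2}=\|x\|_2^{2}=1 .
\]

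The conclusion is now immediate. Since $|p_x|^{2}\ge0$ and $f_{\min}\le f\le f_{\max}$ pointwise, the integral identity yields
\[
f_{\min}\le x^{*}T^{(2)}_{n}(f)x\le f_{\max}
\]
for every unit vector $x$, and taking the minimum and maximum over $x$ gives both bounds. The $C^{1}$ hypothesis is stronger than needed: continuity ensures $f_{\min}$ and $f_{\max}$ are attained, and $L^{1}$ integrability justifies the computation.

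I expect the main obstacle to be the bookkeeping in the integral identity, namely keeping the Kronecker ordering (outer index $i_1$, inner index $i_2$) consistent with the ordering of the Fourier modes so that the double sum factorizes exactly as $|p_x|^{2}$. I would verify it first on rank-one vectors $x=u\otimes v$, where the identity reduces to the one-level Toeplitz case, and then extend by sesquilinearity.
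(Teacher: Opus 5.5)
Your argument is correct. The paper gives no proof of this lemma; it imports the result from \cite{SHW_2001}, so there is no in-text argument to match yours against. What you supply is the standard self-contained Rayleigh-quotient/Parseval argument (as in Grenander--Szeg\H{o}): the identity $x^{*}T^{(2)}_{n}(f)x=\frac{1}{4\pi^{2}}\iint f\,|p_x|^{2}$ together with $\frac{1}{4\pi^{2}}\iint|p_x|^{2}=\|x\|_2^{2}$.

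Compared with the bare citation, your route shows two things the citation hides.
\begin{itemize}
\item The stated hypotheses are stronger than needed. A real-valued $f\in L^{1}$ already gives the bounds with essential infimum and supremum, and makes the matrix Hermitian. Evenness under the joint flip $\theta\mapsto-\theta$ is needed only to make the entries real. Separate evenness in each variable, which you list, is never actually used.
\item You get strict inequalities $f_{\min}<\lambda_{\min}$ and $\lambda_{\max}<f_{\max}$ whenever $f$ is not a.e.\ constant, because a nonzero trigonometric polynomial vanishes only on a null set. For $f_{\gamma_1,\gamma_2}$, whose minimum $0$ is attained only at the origin, the non-strict statement gives only $\lambda_{\min}(A_N)\ge 0$. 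The strict version yields positive definiteness of $A_N$ directly, without the diagonal-dominance argument.
\end{itemize}

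The bookkeeping you flag as the main obstacle is not really one. The identity holds termwise, since $\sum\overline{x_{i_1,i_2}}\,x_{j_1,j_2}\,e^{-\mathbf{i}((i_1-j_1)\theta_1+(i_2-j_2)\theta_2)}=\overline{p_x}\,p_x$ for any consistent indexing. The rank-one check is therefore unnecessary.
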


\section{Two Grid method}\label{TGM}
The Galerkin multigrid method belongs to the class of classical multigrid methods and is primarily applicable to symmetric positive definite linear systems $Au=b$. Its simplest form is the two grid method (TGM), which consists of a smoother and a coarse-grid correction. Multi-level schemes such as the V-cycle are realized by recursively cycling the TGM. The specific procedure of the TGM is shown in Table \ref{Table1}.
 In this paper, we choose the
weighted Jacobi  as the smoother, whose update formula is
\begin{align*}
	u^{(j+1)}
	= Gu^{(j)} + \omega \operatorname{diag}(A)^{-1} b,
\end{align*}
where \(u^{(j)}\) is the iteration vector at the \(j\)-th step,  and
\begin{align}
G = I_{n}-\omega\operatorname{diag}(A)^{-1}A.
\end{align}
 The coarse grid correction is obtained by combining the restriction operator $I^{H}_{h}$ and the prolongation operator $I^{h}_{H}$, where $I^{H}_{h}$ is a rectangular matrix with full row rank.
Specifically, we take
\begin{align}
I_h^H=P\otimes P, \label{tssz}
\end{align}
where
\[
P=
\begin{bmatrix}
	1 & 2 & 1 &        &        \\
	& 1 & 2 & 1      &        \\
	&   & \ddots & \ddots & \ddots \\
	&   &        & 1      & 2 & 1
\end{bmatrix}\in {\mathbb R^{ [\frac{N}{2}]\times N      }}.
\]
The prolongation operator is defined by
\begin{align}
I_H^h=\left(I_h^H\right)^T=P^{T}\otimes P^{T}, \label{czsz}
\end{align}
which defines a Galerkin coarse matrix $A_{H}=I^{H}_{h}AI^{h}_{H}$. 
\begin{table}[htbp] 
	\centering
	\renewcommand{\arraystretch}{1.3}
	\caption {Two Grid method} \label{Table1}
	\begin{tabular}{@{}>{\raggedright\arraybackslash}p{0.9\textwidth}@{}}
		\toprule

		1) \textbf{Pre-smoothing:} Apply $j_1$ iterations of $G$ to the system $Au=b$
		 to obtain $u^{(j_1)}$\\
		2) \textbf{Compute the residual:} $r=b-Au^{(j_{1})}$\\
		 3) \textbf{Restriction:} $b_{H}=I^{H}_{h}r$\\
		4)\textbf{Coarse grid equation solving:} Solve the system $A_{H}e = b_{H}$ exactly to obtain $\hat{e}$ \\
		5) \textbf{Update solution}  $\hat{u}=u^{(j_{1})}+I^{h}_{H}\hat {e}$\\
		6) \textbf{Post-smoothing:} Starting from $\hat{u}$, perform $j_2$ iterations using $G$\\
		\bottomrule
	\end{tabular}
\end{table}

\noindent Take $j_{1}=0$, $j_{2}=1$.
In this setting, the iteration matrix of the TGM is given as follows:
\begin{align}
	E_{TG}=G\left(I_{n}- I^{h}_{H}(I^{H}_{h}AI^{h}_{H})^{-1}I^{H}_{h}A    \right).
\end{align}
$E_{TG}$ is also referred to as the two grid error propagation operator. To characterize its convergence, we introduce the following inner product and the corresponding induced energy norm.

\begin{definition}\rm
	Let $\langle u, v \rangle$ be the standard Euclidean inner product, and let $A=(a_{ij})\in \mathbb R^{n\times n}$ be a symmetric  positive definite matrix. Define
	\begin{align}
		\langle u,v\rangle_{0,A}=\langle {\text {diag}(A)}u,v\rangle, \quad	\langle u,v\rangle_{1,A}=\langle Au,v\rangle, \quad	\langle u,v\rangle_{2,A}=\langle \left(\text{diag}(A)\right)^{-1}Au,Av\rangle. 	
	\end{align}
	The corresponding energy norms are
	\begin{align}
		||u||_{0,A}=\sqrt{\langle u,u \rangle_{0,A}},\quad 	||u||_{1,A}=\sqrt{\langle u,u \rangle_{1,A}},\quad 	||u||_{2,A}=\sqrt{\langle u,u \rangle_{2,A}}.
	\end{align}
\end{definition}

For the  TGM, Ruge and Stüben established the following analytical framework, which reduces the overall convergence analysis of the method to the separate examination and characterization of the properties of the smoother and prolongation operators.

\begin{lem}\rm \cite{Ruge}  \label{zyyl}
Let $A=(a_{ij}) \in \mathbb{R}^{n \times n}$ be a symmetric positive definite matrix, and let $G$ be the smoother for the TGM. Suppose the prolongation operator $I_{H}^{h}\in \mathbb R^{n\times n_{c}}$ $(n_{c}<n)$ is a column-full-rank matrix. If\\
	(1): There exists a constant $\alpha>0$ such that
	\begin{align}
		\|Ge^{h}\|^{2}_{1,A}\leq \|e^{h}\|_{1,A}^{2}-\alpha \|e^{h}\|_{2,A}^{2}, \forall e^{h}\in \mathbb R^{n}. \quad{\text{(Smoothing property)}} 
		\label{mgsz}
	\end{align}
	\\
	(2): There exists a constant $\beta>0$ such that
	\begin{align}
		\min_{e^{H}\in \mathbb{R}^{n_{c}}} \|e^{h} - I^{h}_{H} e^{H}\|^{2}_{0,A} \leq \beta \|e^{h}\|^{2}_{1,A}, \quad \forall e^{h}\in \mathbb{R}^{n}. \quad{\text{(Approximation property)}}\label{ytsz} 
	\end{align}
	Then $\beta\geq \alpha$ and spectral radius of $E_{TG}$ satisfies
	\begin{equation*}
		\rho(E_{TG})\leq 	\|E_{TG}\|_{1,A}\leq {\sqrt{1-\frac{\alpha}{\beta}}}<1. 
	\end{equation*}
\end{lem}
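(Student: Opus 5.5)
The plan follows the standard Ruge--Stüben argument. Write the coarse-grid correction as $K=I_n-\Pi$ with $\Pi=I^{h}_{H}(I^{H}_{h}AI^{h}_{H})^{-1}I^{H}_{h}A$, so that $E_{TG}=GK$. The first step is to verify that $\Pi$ is the $\langle\cdot,\cdot\rangle_{1,A}$-orthogonal projection onto $\mathcal R(I^{h}_{H})$. This uses two facts: $I^{H}_{h}=(I^{h}_{H})^T$, which holds here by (\ref{czsz}), and $I^{h}_{H}$ having full column rank, so that $A_H$ is SPD and invertible. It then follows that $K$ is also an $A$-orthogonal projection. Consequently $\|Ke\|_{1,A}\le\|e\|_{1,A}$, and $Ke$ is $A$-orthogonal to $\mathcal R(I^{h}_{H})$.

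The key step is to show that $\|e\|_{1,A}^2\le\beta\|e\|_{2,A}^2$ for every $e$ in $\mathcal R(K)$, i.e.\ for every $e$ with $\langle Ae,I^{h}_{H}e^H\rangle=0$ for all $e^H$. For such $e$ and an arbitrary $e^H$, write
\begin{align*}
\|e\|_{1,A}^2=\langle Ae,e-I^{h}_{H}e^H\rangle=\langle D^{-1/2}Ae,D^{1/2}(e-I^{h}_{H}e^H)\rangle,
\end{align*}
where $D=\operatorname{diag}(A)$. Cauchy--Schwarz bounds the right-hand side by $\|e\|_{2,A}\,\|e-I^{h}_{H}e^H\|_{0,A}$. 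Minimizing over $e^H$ and applying the approximation property (\ref{ytsz}) gives
\begin{align*}
\|e\|_{1,A}^2\le\sqrt{\beta}\,\|e\|_{2,A}\,\|e\|_{1,A}.
\end{align*}
Dividing by $\|e\|_{1,A}$ (the case $e=0$ is trivial) yields the claim.

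Now take an arbitrary $e$ and set $v=Ke$. The smoothing property (\ref{mgsz}) applied to $v$, followed by the key step, gives
\begin{align*}
\|E_{TG}e\|_{1,A}^2=\|Gv\|_{1,A}^2\le\|v\|_{1,A}^2-\alpha\|v\|_{2,A}^2\le\Bigl(1-\frac{\alpha}{\beta}\Bigr)\|v\|_{1,A}^2\le\Bigl(1-\frac{\alpha}{\beta}\Bigr)\|e\|_{1,A}^2 .
\end{align*}
This proves $\|E_{TG}\|_{1,A}\le\sqrt{1-\alpha/\beta}$. The bound $\rho(E_{TG})\le\|E_{TG}\|_{1,A}$ holds because the spectral radius is bounded by any induced operator norm.

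To get $\beta\ge\alpha$, and hence strict inequality, I would use $n_c<n$. This guarantees that $\mathcal R(K)$ contains a nonzero $v$. Applying (\ref{mgsz}) to this $v$ gives $0\le\|v\|_{1,A}^2-\alpha\|v\|_{2,A}^2$, so $\alpha\|v\|_{2,A}^2\le\|v\|_{1,A}^2\le\beta\|v\|_{2,A}^2$. Since $v\neq0$ and $A$ is nonsingular, $\|v\|_{2,A}>0$, and therefore $\alpha\le\beta$. Because $\alpha>0$, this yields $\sqrt{1-\alpha/\beta}<1$.

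I expect the main obstacle to be the key step, specifically making sure that the orthogonality of $Ke$ to the coarse space is the right one. That orthogonality must be with respect to $A$, which in turn relies on the Galerkin choice $A_H=I^{H}_{h}AI^{h}_{H}$ together with $I^{H}_{h}=(I^{h}_{H})^T$.
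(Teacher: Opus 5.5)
Your proof is correct, and it is essentially the classical Ruge--St\"uben argument, which the paper cites from \cite{Ruge} without reproducing: $A$-orthogonality of the Galerkin coarse-grid correction, a Cauchy--Schwarz split through $\operatorname{diag}(A)^{\pm 1/2}$ giving $\|e\|_{1,A}^2\le\beta\|e\|_{2,A}^2$ on $\mathcal R(K)$, and then the smoothing property applied to $Ke$. The only fix is the order of presentation: prove $\beta\ge\alpha$ first, because the last inequality in your chain (multiplying $\|v\|_{1,A}^2\le\|e\|_{1,A}^2$ by $1-\alpha/\beta$) and the square root $\sqrt{1-\alpha/\beta}$ both require $1-\alpha/\beta\ge 0$.
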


Ruge and St\"uben also rigorously discussed the range of the relaxation parameter $\omega$ when the smoother $G$ is taken as the weighted Jacobi iteration method, in order to guarantee the smoothing property, namely the following lemma:

\begin{lem}  \rm \cite{Ruge}
Let $A=(a_{ij})\in\mathbb{R}^{n\times n}$ be a symmetric positive definite matrix, and let
\[
G=I_n-\omega\,\operatorname{diag}(A)^{-1}A.
\]
If the relaxation parameter $\omega$ satisfies
\[
\omega\in\left(0,\frac{2}
{\rho\!\left(\operatorname{diag}(A)^{-1}A\right)}\right),
\]
then there exists a constant $\alpha>0$ such that inequality~(\ref{mgsz}) in Lemma~\ref{zyyl} holds.
\end{lem}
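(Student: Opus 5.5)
Write $D=\operatorname{diag}(A)$, which is positive since $A$ is symmetric positive definite, and $\rho:=\rho(D^{-1}A)$. The plan is to expand $\|Ge\|_{1,A}^2$ and compare the $\omega^2$ term against the $\omega$ term using the spectral bound $\rho$.

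For any $e\in\mathbb R^n$, the symmetry of $A$ gives
\begin{align*}
\|Ge\|_{1,A}^2&=\langle A(e-\omega D^{-1}Ae),\,e-\omega D^{-1}Ae\rangle\\
&=\|e\|_{1,A}^2-2\omega\langle D^{-1}Ae,Ae\rangle+\omega^2\langle AD^{-1}Ae,D^{-1}Ae\rangle .
\end{align*}
The middle term is $2\omega\|e\|_{2,A}^2$. It therefore suffices to bound the last term by $\omega^2\rho\,\|e\|_{2,A}^2$.

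To obtain this bound, set $v=D^{-1/2}Ae$, so that $D^{-1}Ae=D^{-1/2}v$. Then
\[
\langle AD^{-1}Ae,D^{-1}Ae\rangle=\langle D^{-1/2}AD^{-1/2}v,v\rangle
\le\lambda_{\max}\big(D^{-1/2}AD^{-1/2}\big)\|v\|^2 .
\]
The matrix $D^{-1/2}AD^{-1/2}$ is symmetric positive definite and similar to $D^{-1}A$, so its largest eigenvalue equals $\rho$. Moreover $\|v\|^2=\langle D^{-1}Ae,Ae\rangle=\|e\|_{2,A}^2$.

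Combining the two steps gives
\[
\|Ge\|_{1,A}^2\le\|e\|_{1,A}^2-\omega(2-\omega\rho)\|e\|_{2,A}^2 .
\]
Take $\alpha=\omega(2-\omega\rho)$. This is positive exactly when $0<\omega<2/\rho$, which yields inequality~(\ref{mgsz}) of Lemma~\ref{zyyl}.

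The only point needing care is the similarity step: it identifies the Rayleigh-quotient bound for the symmetrized matrix with $\rho(D^{-1}A)$. The rest is direct algebra.
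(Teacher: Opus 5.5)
Your proof is correct and complete. The paper gives no proof of its own here and only cites Ruge--St\"uben; your argument is exactly the standard Ruge--St\"uben one. You expand $\|Ge\|_{1,A}^2$ and bound $\langle Aw,w\rangle\le\rho\,\langle Dw,w\rangle$ for $w=D^{-1}Ae$ with $D=\operatorname{diag}(A)$. This is equivalently $A\le\rho D$, the same inequality the paper uses in Lemma~\ref{yl4.4}, and it yields $\alpha=\omega(2-\omega\rho)$.

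One useful consequence: the computation goes through verbatim with $\rho$ replaced by any upper bound $\eta\ge\rho(D^{-1}A)$, giving $\alpha=\omega(2-\omega\eta)$. That is the form which makes $\alpha$ independent of the time level in Corollary~\ref{tl1}, taking $\eta=\|f_{\gamma_1,\gamma_2}\|_\infty/a_0$.
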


We first establish a TGM analysis for $A_N$, 
verifying that it satisfies both the smoothing property and the approximation property. 
Building upon this, by utilizing the perturbation property of positive diagonal matrices, 
we further derive the TGM analysis for $A_N^{(m)}$. 

\begin{thm} \rm
	 Let $A_{N}=T^{(2)}_{n}\left(f_{\gamma_{1},\gamma_{2}}(\theta_{1},\theta_{2})\right)$, where $f_{\gamma_{1},\gamma_{2}}(\theta_{1},\theta_{2})$ is given by (\ref{thm3.1}). Let $G=I_{n}-\omega \left(\mathrm{diag}(A_{N})\right )^{-1}A_{N}$. If $\omega$ satisfies $0<\omega<\frac{2a_{0}}{\|f_{\gamma_{1},\gamma_{2}}\|_{\infty}}$, where $a_{0}=-2g^{(\gamma_{1})}_{1}-\frac{2\eta_{2}}{\eta_{1}}g^{(\gamma_{2})}_{1}$, then there exists a  constant $\alpha$ satisfying (\ref{mgsz}) in Lemma \ref{zyyl}.
	\end{thm}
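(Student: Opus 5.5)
The plan is to reduce the statement to the preceding Ruge--St\"uben lemma on weighted Jacobi smoothing. That lemma already gives a constant $\alpha>0$ in (\ref{mgsz}) whenever $\omega\in\bigl(0,2/\rho(\operatorname{diag}(A_N)^{-1}A_N)\bigr)$. So it suffices to show
\[
\frac{2a_0}{\|f_{\gamma_1,\gamma_2}\|_\infty}\le \frac{2}{\rho\bigl(\operatorname{diag}(A_N)^{-1}A_N\bigr)},
\]
that is, $\rho(\operatorname{diag}(A_N)^{-1}A_N)\le \|f_{\gamma_1,\gamma_2}\|_\infty/a_0$.

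\emph{Step 1: the diagonal is a scalar matrix.} From the definition of $A_N$, the diagonal entries of $I_N\otimes T^{\gamma_1}_N$ all equal $-2g^{(\gamma_1)}_1=2\gamma_1$. The diagonal entries of $\frac{\eta_2}{\eta_1}T^{\gamma_2}_N\otimes I_N$ all equal $-\frac{2\eta_2}{\eta_1}g^{(\gamma_2)}_1$. Hence $\operatorname{diag}(A_N)=a_0 I_n$ with $a_0>0$, since $g_1^{(\gamma)}=-\gamma<0$ by Lemma \ref{yl1} and $\eta_1,\eta_2>0$. Equivalently, $a_0$ is the zeroth Fourier coefficient $(f_{\gamma_1,\gamma_2})_{[0,0]}$. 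It follows that $\operatorname{diag}(A_N)^{-1}A_N=a_0^{-1}A_N$, and therefore $\rho(\operatorname{diag}(A_N)^{-1}A_N)=\lambda_{\max}(A_N)/a_0$, because $A_N$ is symmetric positive definite.

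\emph{Step 2: bound $\lambda_{\max}(A_N)$ by the symbol.} We would apply Lemma \ref{lm3.2} to $A_N=T^{(2)}_n(f_{\gamma_1,\gamma_2})$, using that $f_{\gamma_1,\gamma_2}$ is real-valued and even (the remark following the theorem on the generating function). This gives $\lambda_{\max}(A_N)\le \max f_{\gamma_1,\gamma_2}\le\|f_{\gamma_1,\gamma_2}\|_\infty$.

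The main obstacle is the regularity hypothesis. Lemma \ref{lm3.2} is stated for $C^1$ symbols, but $f_\gamma$ contains $|\sin(\theta/2)|^{\gamma}$ and $|\theta|$, so it is only continuous (with a $C^1$ issue at $\theta=0$, and possibly at $\pm\pi$ under periodic extension). I would avoid this as follows: for any real $L^1$ symbol $f$ and any unit vector $x$, the Rayleigh quotient satisfies
\[
x^{T}T^{(2)}_n(f)x=\frac{1}{4\pi^2}\int_{[-\pi,\pi]^2} f(\theta_1,\theta_2)\,\Bigl|\sum_{j_1,j_2}x_{j_1,j_2}e^{\mathbf{i}(j_1\theta_1+j_2\theta_2)}\Bigr|^2\,\mathrm{d}\theta_1\,\mathrm{d}\theta_2\le \operatorname{ess\,sup} f ,
\]
by Parseval. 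This yields the same upper bound with only continuity. Alternatively, one can cite Lemma \ref{lm3.2} directly and remark that its proof uses only this Rayleigh quotient identity.

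\emph{Step 3: conclude.} Combining the two steps gives $\rho(\operatorname{diag}(A_N)^{-1}A_N)\le\|f_{\gamma_1,\gamma_2}\|_\infty/a_0$. Hence $0<\omega<2a_0/\|f_{\gamma_1,\gamma_2}\|_\infty$ implies $\omega\in\bigl(0,2/\rho(\operatorname{diag}(A_N)^{-1}A_N)\bigr)$, and the Ruge--St\"uben weighted Jacobi lemma supplies $\alpha>0$ satisfying (\ref{mgsz}).

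If an explicit $\alpha$ is wanted, the standard computation applies. Writing $D=a_0I$, one has $\|Ge\|_{1,A}^2=\|e\|_{1,A}^2-\langle(2\omega/a_0)A^2-(\omega^2/a_0^2)A^3\,e,e\rangle$. This gives $\alpha=\omega(2-\omega\lambda_{\max}(A_N)/a_0)$, which is at least $\omega(2-\omega\|f_{\gamma_1,\gamma_2}\|_\infty/a_0)>0$, a bound uniform in $N$.
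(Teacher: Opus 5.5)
Your argument is correct and is essentially the paper's proof: $\operatorname{diag}(A_N)=a_0I_n$, so $\rho(\operatorname{diag}(A_N)^{-1}A_N)=\lambda_{\max}(A_N)/a_0\le\|f_{\gamma_1,\gamma_2}\|_\infty/a_0$ by the symbol bound of Lemma \ref{lm3.2}, and the Ruge--St\"uben weighted Jacobi lemma then gives $\alpha$. Your Parseval/Rayleigh-quotient detour is valid but not needed: for $\gamma\in(1,2)$ one has $f_\gamma(\theta)=O(|\theta|^{\gamma})$ and $f_\gamma'(\theta)=O(|\theta|^{\gamma-1})$ near $0$, and $f_\gamma$ is smooth elsewhere on the circle, so $f_{\gamma_1,\gamma_2}$ is in fact $C^1$ and the lemma applies as stated; also, your closing claim that the lower bound on $\alpha$ is uniform in $N$ needs an extra argument when $\gamma_1\neq\gamma_2$, because $\eta_2/\eta_1$, and hence $\|f_{\gamma_1,\gamma_2}\|_\infty/a_0$, changes with the mesh.
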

\begin{proof}	
	Since all diagonal entries of $A_{N}$ are $a_{0}$, we have
	\begin{align*}
		\frac{2}{\lambda_{\max}\!\left(\operatorname{diag}(A_{N})^{-1}A_{N}\right)}
		=\frac{2a_{0}}{\lambda_{\max}\!\left(A_{N}\right)}.
	\end{align*}
	By Lemma \ref{lm3.2}, we obtain
	\[
	\frac{2a_{0}}{\|f_{\gamma_{1},\gamma_{2}}\|_{\infty}}
	\leq \frac{2a_{0}}{\lambda_{\max}\!\left(A_{N}\right)}.
	\]
	Thus, from Lemma 3.2, a sufficient condition for $\omega$ is
	\[
	\omega\in\left(0,\frac{2a_{0}}{\|f_{\gamma_{1},\gamma_{2}}\|_{\infty}}\right).
	\]
	\noindent The Proof is completed.
\end{proof}

The approximation properties of prolongation operators were studied in
\cite{SHW_2001}. In particular, sufficient conditions were established
under which a BTTB matrix generated by a nonnegative even function ensures
that the $I^{h}_{H}$ defined by (\ref{tssz}) satisfies
(\ref{ytsz}). More precisely, the following lemma holds.

\begin{lem}\rm   \cite{SHW_2001} If the nonnegative even function $f(\theta_{1},\theta_{2})\in C^{1}[-\pi,\pi]^{2}$ satisfies:
	\begin{align*}
		\underset{(\theta_{1},\theta_{2})\in [-\pi,\pi]^{2} }{\rm min}\frac{f(\theta_{1},\theta_{2})}{2-\cos\theta_{1}-\cos\theta_{2}}>0,
	\end{align*}
	then the BTTB matrix $T^{(2)}_{n}(f)$ generated by $f$ satisfies the following
	property: there exists a constant $\beta$ such that (\ref{ytsz}) holds,
	where the prolongation operator $I^{h}_{H}$ is defined by (\ref{czsz}). 
	
\end{lem}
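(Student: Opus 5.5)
The plan is to reduce the approximation property (\ref{ytsz}) for $T^{(2)}_{n}(f)$ to the classical one for the two-dimensional discrete Laplacian. That matrix is the BTTB matrix generated by $g(\theta_1,\theta_2)=2-\cos\theta_1-\cos\theta_2$, and I would pass from $f$ to $g$ by a spectral comparison.

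\textbf{Step 1: the left-hand side.} $T^{(2)}_{n}(f)$ has constant diagonal $a_0=f_{[0,0]}=\frac{1}{4\pi^2}\int f \ge 0$, and $a_0>0$ unless $f\equiv 0$, which the hypothesis excludes. Hence
\[
\|v\|_{0,A}^2=a_0\|v\|_2^2 .
\]
Since $f$ is continuous, $a_0\le \|f\|_\infty$, so this factor is a harmless constant independent of $n$. It therefore suffices to show that
\[
\min_{e^{H}}\|e^h-I^h_He^H\|_2^2\le \beta'\,\langle T^{(2)}_{n}(f)e^h,e^h\rangle
\]
with $\beta'$ independent of $n$, and then to take $\beta=a_0\beta'$.

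\textbf{Step 2: comparison of symbols.} Let
\[
c=\min_{(\theta_1,\theta_2)} \frac{f(\theta_1,\theta_2)}{2-\cos\theta_1-\cos\theta_2}>0 .
\]
Then $f-cg\ge 0$ on $[-\pi,\pi]^2$, and it is real and even. Since $T^{(2)}_n$ is linear in the symbol,
\[
T^{(2)}_n(f)-c\,T^{(2)}_n(g)=T^{(2)}_n(f-cg).
\]
This matrix is positive semidefinite, because for every $x$
\[
x^*T^{(2)}_n(h)x=\frac{1}{4\pi^2}\int h(\theta)\,\Big|\sum_{j}x_je^{\mathbf{i}j\cdot\theta}\Big|^2\,d\theta\ge 0
\]
whenever $h\ge 0$. (This is the same fact that underlies Lemma \ref{lm3.2}.) Consequently $\|e\|_{1,A}^2\ge c\,\langle L e,e\rangle$, where
\[
L=T^{(2)}_n(g)=\tfrac12\big(I\otimes T_N(2-2\cos\theta)+T_N(2-2\cos\theta)\otimes I\big)
\]
is the Dirichlet five-point Laplacian.

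\textbf{Step 3: the Laplacian estimate.} It remains to prove $\min_{e^H}\|e-(P^T\otimes P^T)e^H\|_2^2\le C\langle Le,e\rangle$ with $C$ independent of $N$. I would construct $e^H$ explicitly by injection: take $e^H_{(k,l)}=\frac14 e_{(2k,2l)}$. The factor $\frac14$ compensates for the weight $2\cdot 2$ that $P^T\otimes P^T$ puts at coarse points. With this choice, $(P^T\otimes P^T)e^H$ is exactly the bilinear interpolant of the even–even nodal values of $e$.

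The interpolation error then vanishes at coarse nodes. At every other fine node it is a difference between $e$ there and an average of at most four neighbouring coarse values. Each such difference is a telescoping sum of at most two or three nearest-neighbour differences $e_{i+1,j}-e_{i,j}$ or $e_{i,j+1}-e_{i,j}$. By Cauchy–Schwarz its square is bounded by a fixed multiple of the squares of those differences. Since each nearest-neighbour difference is used by only $O(1)$ fine nodes, summing gives
\[
\|e-I^h_He^H\|_2^2\le C\sum(\text{nearest-neighbour differences})^2\le 2C\langle Le,e\rangle .
\]
Here the zero Dirichlet values are used for the differences touching the boundary. Combining Steps 1–3 yields $\beta=a_0 C/c$.

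\textbf{Main obstacle.} I expect the delicate part to be the boundary bookkeeping in Step 3. Fine nodes near the boundary, for instance $i=1$, or $i=N$ when $N$ is even and there is no coarse node beyond it, are interpolated using the implicit zero boundary value. Their error must be bounded by differences that include the boundary term $e_{1,j}-0$, which $L$ does control. I would treat these nodes separately, as a one-dimensional argument for $P^T$ first, and then use the tensor structure to handle the mixed (odd,odd) nodes. Everything else is routine.
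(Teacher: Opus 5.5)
The paper gives no proof of this lemma; it quotes it from \cite{SHW_2001}, so there is no internal argument to compare yours against. Your proof is correct and self-contained, and it follows the standard route for this kind of result.

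\begin{itemize}
\item Because the diagonal of $T^{(2)}_n(f)$ is the constant $f_{[0,0]}$, you have $\|\cdot\|_{0,A}^2=a_0\|\cdot\|_2^2$.
\item The quadratic-form integral gives $T^{(2)}_n(f)\ge c\,T^{(2)}_n(2-\cos\theta_1-\cos\theta_2)$.
\item You then estimate the five-point Laplacian explicitly. Taking $e^H$ as one quarter of the even--even injection makes $(P^T\otimes P^T)e^H$ exactly the bilinear interpolant. At the first fine node, and at the last one when $N$ is odd, this interpolation is against the zero boundary value, which $\langle Le,e\rangle$ controls.
\end{itemize}

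Your proof also delivers what the lemma actually needs: a $\beta$ independent of $N$. For fixed $N$ the inequality is trivial, with $\beta=a_0/\lambda_{\min}(A)$. You also never use the $C^1$ hypothesis.

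Two cosmetic slips remain.
\begin{itemize}
\item With coarse nodes at even indices, the right-end node interpolated against the boundary is $i=N$ for $N$ odd. For $N$ even, $i=N$ is itself a coarse node.
\item Your chain ends with $2C\langle Le,e\rangle$, so the final constant is $\beta=2a_0C/c$, not $a_0C/c$.
\end{itemize}
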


\begin{thm}\rm  Let $A_{N}=T^{(2)}_{n}\left(f_{\gamma_{1},\gamma_{2}}(\theta_{1},\theta_{2})\right)$, where $f_{\gamma_{1},\gamma_{2}}(\theta_{1},\theta_{2})$ is given by (\ref{thm3.1}). Let $I^{h}_{H}$ be defined as in  \eqref{czsz}.  Then there exists a  constant $\beta$ satisfying (\ref{ytsz}) in Lemma \ref{zyyl}.
	\end{thm}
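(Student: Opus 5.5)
The plan is to invoke the preceding lemma from \cite{SHW_2001}. It then suffices to check three things for $f_{\gamma_1,\gamma_2}$: it is nonnegative, even and regular enough, and the ratio $f_{\gamma_1,\gamma_2}(\theta_1,\theta_2)/(2-\cos\theta_1-\cos\theta_2)$ has a positive minimum on $[-\pi,\pi]^2$. Evenness was noted in the Remark after (\ref{thm3.1}). Since $f_{\gamma_1,\gamma_2}=f_{\gamma_1}(\theta_1)+\frac{\eta_2}{\eta_1}f_{\gamma_2}(\theta_2)$ with $\eta_2/\eta_1>0$, it is enough to prove a one-dimensional estimate for each $f_\gamma$, $\gamma\in(1,2)$:
\begin{align*}
c_\gamma\,(1-\cos\theta)\le f_\gamma(\theta),\qquad \theta\in[-\pi,\pi],
\end{align*}
for some $c_\gamma>0$. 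Adding the two estimates gives $f_{\gamma_1,\gamma_2}\ge \min\{c_{\gamma_1},\tfrac{\eta_2}{\eta_1}c_{\gamma_2}\}(2-\cos\theta_1-\cos\theta_2)$. This yields both nonnegativity and the required positive lower bound on the ratio. At $(0,0)$ both numerator and denominator vanish, so I will interpret the minimum as an infimum over the punctured square, which is how the lemma is meant.

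For the one-dimensional bound, restrict to $\theta\in[0,\pi]$ by evenness. Write $\phi(\theta)=\theta(1-\frac{\gamma}{2})+\frac{\gamma\pi}{2}$. On $[0,\pi]$, $\phi$ increases from $\frac{\gamma\pi}{2}\in(\frac{\pi}{2},\pi)$ to $\pi$, so $-\cos\phi(\theta)\ge -\cos\frac{\gamma\pi}{2}>0$. Hence
\begin{align*}
f_\gamma(\theta)\ge 2^{\gamma+1}\Big(-\cos\frac{\gamma\pi}{2}\Big)\Big(\sin\frac{\theta}{2}\Big)^{\gamma}.
\end{align*}
Since $1-\cos\theta=2\sin^2\frac{\theta}{2}$, it remains to compare $(\sin\frac{\theta}{2})^\gamma$ with $(\sin\frac{\theta}{2})^2$. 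Because $0\le\sin\frac{\theta}{2}\le1$ and $\gamma<2$, we have $(\sin\frac{\theta}{2})^\gamma\ge(\sin\frac{\theta}{2})^2$. This gives $c_\gamma=-2^{\gamma}\cos\frac{\gamma\pi}{2}>0$. In fact the ratio blows up like $|\theta|^{\gamma-2}$ near $0$, which is even more favorable.

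The main obstacle is the smoothness hypothesis $f\in C^1([-\pi,\pi]^2)$ in the cited lemma. $f_\gamma$ behaves like $|\theta|^\gamma$ at $0$, which is $C^1$ since $\gamma>1$. At $\theta=\pm\pi$ one has to check that the even extension is $C^1$ across the periodic boundary. I would handle this by noting that $f_\gamma$ has absolutely summable Fourier coefficients $-g^{(\gamma)}_{k+1}=O(k^{-\gamma-1})$, so it is continuous and $2\pi$-periodic. Its derivative exists away from $0$, and the one-sided derivatives at $\pm\pi$ coincide by evenness and periodicity. If the $C^1$ requirement turns out to be only technical in \cite{SHW_2001}, I would remark that the proof there uses only continuity, nonnegativity and the ratio bound, all of which are established above. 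With these hypotheses verified, the lemma supplies $\beta$ satisfying (\ref{ytsz}).
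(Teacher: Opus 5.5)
Your proposal is correct and is essentially the paper's own proof. It uses the same one-dimensional bound $f_\gamma(\theta)\ge -2^{\gamma}\cos\frac{\gamma\pi}{2}\,(1-\cos\theta)$, obtained from $-\cos\phi_\gamma\ge-\cos\frac{\gamma\pi}{2}$ and $(\sin\frac{|\theta|}{2})^\gamma\ge\sin^2\frac{|\theta|}{2}$, combined with the constant $\min\{c_{\gamma_1},\frac{\eta_2}{\eta_1}c_{\gamma_2}\}$; the paper additionally shows the quotient tends to $+\infty$ at the origin so the infimum is attained, and it never addresses the $C^1$ hypothesis that you discuss.

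One correction to that extra $C^1$ remark. Evenness and periodicity only give $f_\gamma'(-\pi^+)=-f_\gamma'(\pi^-)$, so you must check $f_\gamma'(\pi^-)=0$ directly; it holds because $\cos\frac{\pi}{2}=0$ and $\sin\pi=0$. More simply, $g^{(\gamma)}_k=O(k^{-\gamma-1})$ with $\gamma>1$ gives $\sum_k k\,|g^{(\gamma)}_{k+1}|<\infty$, which yields $C^1$ on the torus at once.
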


\begin{proof}
	Set
	\[
	\eta=\frac{\eta_{2}}{\eta_{1}}>0.
	\]
	Define
	\[
	\phi_{\gamma}(\theta)
	=
	|\theta|\left(1-\frac{\gamma}{2}\right)
	+\frac{\gamma\pi}{2}.
	\]
	Since $|\theta|\in[0,\pi]$ and $\gamma\in(1,2)$, we have
\begin{align*}
	-\cos\left(\phi_{\gamma}(\theta)\right)\geq -\cos(\frac{\gamma \pi}{2})
\end{align*}
	and
\begin{align*}
	\left(\sin \frac{|\theta|}{2}\right)^{\gamma}\geq \sin^{2}\frac{|\theta|}{2}=\frac{(1-\cos\theta)}{2}.
\end{align*}
	Then 
	\begin{align*}
		f_{\gamma}(\theta)&=-2^{\gamma+1}\left( \sin\frac{|\theta|}{2} \right)^{\gamma}\cos\left(\phi_{\gamma}(\theta)\right)\\
		&\geq -2^{\gamma }\cos\left(\frac{\gamma \pi}{2}\right)(1-\cos \theta)
	\end{align*}
	Consequently, for every	$(\theta_{1},\theta_{2})\neq(0,0)$,
	\begin{align*}
		f_{\gamma_{1},\gamma_{2}}(\theta_{1},\theta_{2})
		&=
		f_{\gamma_{1}}(\theta_{1})
		+\eta f_{\gamma_{2}}(\theta_{2}) \\
		&\geq
-2^{\gamma_{1}}\left(\cos\frac{\gamma_{1}\pi }{2}\right)(1-\cos\theta_{1})-\eta2^{\gamma_{2}}\left(\cos\frac{\gamma_{2}\pi }{2}\right)(1-\cos\theta_{2})\\
		&\geq
		\min\left\{-2^{\gamma_{1}}\left(\cos\frac{\gamma_{1}\pi }{2}\right),
	-\eta2^{\gamma_{2}}\left(\cos\frac{\gamma_{2}\pi }{2}\right)
		\right\}
		\left(
		2-\cos\theta_{1}-\cos\theta_{2}
		\right).
	\end{align*}
	Hence,
	\begin{align*}
		\frac{
			f_{\gamma_{1},\gamma_{2}}(\theta_{1},\theta_{2})
		}{
			2-\cos\theta_{1}-\cos\theta_{2}
		}
		&\geq
		\min\left\{-2^{\gamma_{1}}\left(\cos\frac{\gamma_{1}\pi }{2}\right),
	-\eta2^{\gamma_{2}}\left(\cos\frac{\gamma_{2}\pi }{2}\right)
	\right\}	>0.
	\end{align*}
	Therefore,
	\[
	\min_{\substack{
			(\theta_{1},\theta_{2})\in[-\pi,\pi]^{2}\\
			(\theta_{1},\theta_{2})\neq(0,0)
	}}
	\frac{
		f_{\gamma_{1},\gamma_{2}}(\theta_{1},\theta_{2})
	}{
		2-\cos\theta_{1}-\cos\theta_{2}
	}
	>0.
	\]
	Owing to
	\begin{align*}
\lim\limits_{\theta \to 0}
\frac{f_{\gamma}(\theta)}
{-2\cos\left(\frac{\gamma \pi}{2}\right)|\theta|^{\gamma}}
=1
\end{align*}	
	and
	\begin{align*}
		\lim\limits_{\left(\theta_{1},\theta_{2}\right) \to (0,0)}\frac{\theta^{2}_{1}+\theta^{2}_{2}}{2-\cos \theta_{1}-\cos \theta_{2}}=2.
	\end{align*}
	Then
	\begin{align*}
			\lim\limits_{\left(\theta_{1},\theta_{2}\right) \to (0,0)}	\frac{
			f_{\gamma_{1},\gamma_{2}}(\theta_{1},\theta_{2})
		}{
			2-\cos\theta_{1}-\cos\theta_{2}
		}=+\infty.	
	\end{align*}
	Thus, the quotient cannot attain its infimum near the origin. Since it
	is continuous away from the origin, its infimum is attained on a
	compact subset of
	$[-\pi,\pi]^{2}\setminus\{(0,0)\}$.
	Therefore, the infimum may equivalently be written as a minimum.
	
	\noindent The Proof is completed.
\end{proof}

Several properties obtained by applying positive diagonal matrix perturbations are presented below. If $A$ and $B$ are symmetric positive definite matrices, the notation $A\le B$ means that $B-A$ is a nonnegative definite matrix.

\begin{lem}\rm \label{yl4.4}
	Let $A=(a_{ij}) \in \mathbb{R}^{n\times n}$ be a symmetric positive definite matrix , and let $B=A+D$, where $D$ is a nonnegative diagonal matrix. Then
	\begin{align}
		\rho(\operatorname{diag}(B)^{-1}B)
\leq 	\rho(\operatorname{diag}(A)^{-1}A)  . \label{ine0}
	\end{align}
\end{lem}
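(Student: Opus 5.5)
The plan is to rewrite both spectral radii as maxima of generalized Rayleigh quotients and then compare the two quotients vector by vector.

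\textbf{Step 1 (Rayleigh quotient form).} Write $D_A=\operatorname{diag}(A)$. Since $A$ is symmetric positive definite, $D_A$ has positive diagonal. Because $D$ is diagonal, $\operatorname{diag}(B)=D_A+D$, which is also positive. The matrix $\operatorname{diag}(A)^{-1}A$ is similar to the symmetric matrix $D_A^{-1/2}AD_A^{-1/2}$, so its eigenvalues are real and positive. Hence
\[
\rho(\operatorname{diag}(A)^{-1}A)=\max_{x\neq 0}\frac{x^{T}Ax}{x^{T}D_Ax}.
\]
The same argument gives
\[
\rho(\operatorname{diag}(B)^{-1}B)=\max_{x\neq 0}\frac{x^{T}Ax+x^{T}Dx}{x^{T}D_Ax+x^{T}Dx}.
\]

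\textbf{Step 2 (the radius for $A$ is at least one).} The trace of $\operatorname{diag}(A)^{-1}A$ equals $n$, because every diagonal entry of this matrix is $1$. Its $n$ eigenvalues are positive and sum to $n$, so the largest is at least $1$. Set $\rho_A:=\rho(\operatorname{diag}(A)^{-1}A)$; then $\rho_A\ge 1$.

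\textbf{Step 3 (pointwise comparison).} Fix $x\neq 0$ and put $a=x^{T}Ax>0$, $c=x^{T}D_Ax>0$, $d=x^{T}Dx\ge 0$. We use the elementary fact that $t\mapsto (a+t)/(c+t)$ is monotone in $t\ge 0$ and moves toward $1$.
\begin{itemize}
\item If $a\le c$, then $(a+d)/(c+d)\le 1\le\rho_A$.
\item If $a>c$, then $(a+d)/(c+d)\le a/c\le\rho_A$, since cross-multiplying reduces this to $d(c-a)\le 0$.
\end{itemize}
In either case the Rayleigh quotient for $B$ at $x$ is at most $\rho_A$. Taking the maximum over $x$ gives \eqref{ine0}.

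The only non-obvious ingredient is Step 2. Without the bound $\rho_A\ge 1$, adding $D$ could push quotients that are below $1$ upward toward $1$. The trace argument is what rules this out, so I expect Step 2 to be the key step; the rest is routine.
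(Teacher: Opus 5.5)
Your proof is correct and is essentially the paper's argument. The paper also uses the trace to get $l=\rho(\operatorname{diag}(A)^{-1}A)\ge 1$, then adds the Loewner inequalities $A\le l\operatorname{diag}(A)$ and $D\le lD$ to obtain $A+D\le l(\operatorname{diag}(A)+D)$; this is exactly your pointwise Rayleigh-quotient bound $a+d\le lc+ld$, written without your case split on $a\le c$ versus $a>c$.
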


\begin{proof}

We denote $D_0 = \operatorname{diag}(A)$. Let
\begin{align*}
	l = \rho(D_0^{-1}A) = \lambda_{\max}\bigl(D_0^{-\frac{1}{2}} A D_0^{-\frac{1}{2}}\bigr).
\end{align*}
Since $\operatorname{tr}(D_0^{-\frac{1}{2}} A D_0^{-\frac{1}{2}})=n$, we have $l \geq 1$ and
\begin{align*}
	D_0^{-\frac{1}{2}} A D_0^{-\frac{1}{2}}\leq  l I_n,
\end{align*}
i.e.,
\begin{align}\label{ine1}
	A\leq lD_0.
\end{align}
Since $D\geq 0$ and $l\geq 1$, we have
\begin{align}
	D\leq lD. \label{ine2}
\end{align}
Adding \eqref{ine1} and \eqref{ine2}, we obtain
\begin{align}
	A+D\leq l(D_0+D).
\end{align}
Applying the congruence transformation induced by $(D+D_0)^{-\frac{1}{2}}$ to both sides of the above inequality yields
\begin{align*}
	(D+D_0)^{-\frac{1}{2}}(A+D)(D+D_0)^{-\frac{1}{2}}\leq lI_n.
\end{align*}
Taking the maximum eigenvalue on both sides of the above matrix inequality yields \eqref{ine0}.\\
The proof is complete.
\end{proof}

\begin{lem}\rm \label{xyl2}
	Let $A=(a_{ij})\in\mathbb R^{n\times n}$ be a symmetric positive definite matrix, and there exists a constant $\beta>0$ such that for any $e^h\in\mathbb R^n$,
	\begin{align*}
		\min_{e^H\in\mathbb R^{n_c}} \|e^h - I_H^h e^H\|_{0,A}^2 \le \beta \|e^h\|_{1,A}^2 .
	\end{align*}
	Define $B=A+D$, where $D=(d_{ij})\in\mathbb R^{n\times n}$ is a nonnegative  diagonal matrix. Then for any $e^h\in\mathbb R^n$, we have
	\[
	\min_{e^H\in\mathbb R^{n_c}} \|e^h - I_H^h e^H\|_{0,B}^2 \le \beta\left(1+\frac{ d_{\max}}{a_{\min}}\right)\|e^h\|_{1,B}^2,
	\]
	where $ d_{\max} = \max\{d_{11},\dots,d_{nn}\}$ and $a_{\min} = \min\{a_{11},\dots,a_{nn}\}$.
\end{lem}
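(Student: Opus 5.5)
The plan is to compare the two weighted norms on both sides separately. First the left-hand side, through the $0$-norm. Then the right-hand side, through the $1$-norm.

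Denote $D_A=\operatorname{diag}(A)$. Since $D$ is diagonal, $\operatorname{diag}(B)=D_A+D$. For every diagonal index we have $d_{ii}\le d_{\max}\le \frac{d_{\max}}{a_{\min}}a_{ii}$, so in the Loewner order
\begin{align*}
D_A+D\le \left(1+\frac{d_{\max}}{a_{\min}}\right)D_A .
\end{align*}
Hence for every vector $v$,
\begin{align*}
\|v\|_{0,B}^2=\langle (D_A+D)v,v\rangle\le \left(1+\frac{d_{\max}}{a_{\min}}\right)\|v\|_{0,A}^2 .
\end{align*}
Since $A$ is SPD, $a_{\min}>0$, so this is well defined.

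Next, fix $e^h\in\mathbb R^n$. We apply the previous inequality to $v=e^h-I_H^he^H$ for every $e^H$ and take the minimum over $e^H$ on both sides. The minimum is attained because the norms involved are Euclidean-type norms on a finite-dimensional space. This gives
\begin{align*}
\min_{e^H}\|e^h-I_H^he^H\|_{0,B}^2\le \left(1+\frac{d_{\max}}{a_{\min}}\right)\min_{e^H}\|e^h-I_H^he^H\|_{0,A}^2 .
\end{align*}
By the hypothesis, the last minimum is bounded by $\beta\|e^h\|_{1,A}^2$.

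Finally, $\|e^h\|_{1,A}^2=\langle Ae^h,e^h\rangle\le \langle Ae^h,e^h\rangle+\langle De^h,e^h\rangle=\|e^h\|_{1,B}^2$, because $D\ge 0$. Chaining the three inequalities yields the claim.

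There is no real obstacle. The only point needing care is the diagonal comparison $D\le \frac{d_{\max}}{a_{\min}}D_A$. It holds entrywise because both matrices are diagonal and $a_{ii}\ge a_{\min}>0$. The argument also does not need $B$ to be SPD; that nevertheless holds, since $B=A+D$ with $A$ SPD and $D\ge0$.
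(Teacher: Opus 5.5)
Your proposal is correct and follows essentially the same route as the paper. Both arguments bound the $0,B$-norm by $\bigl(1+\frac{d_{\max}}{a_{\min}}\bigr)$ times the $0,A$-norm via $\operatorname{diag}(B)=\operatorname{diag}(A)+D$, then apply the hypothesis and $\|e^h\|_{1,A}\le\|e^h\|_{1,B}$. The only difference is cosmetic: you minimize both sides of the norm comparison directly, whereas the paper evaluates the $B$-norm at the minimizer of the $A$-problem.
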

\begin{proof}
	
	For any $e^{h}\in \mathbb R^{n}$ and $e^{H}\in \mathbb R^{n_{c}}$, we have
	\begin{align*}
		\|e^{h}-I^{h}_{H}e^{H}\|^{2}_{0,B}
		&= \|e^{h}-I^{h}_{H}e^{H}\|^{2}_{0,A} + \|e^{h}-I^{h}_{H}e^{H}\|^{2}_{0,D} \\
		&\leq \left(1+\frac{ d_{\max}}{a_{\min}}\right) \|e^{h}-I^{h}_{H}e^{H}\|^{2}_{0,A}.
	\end{align*}
	Now, suppose that for a fixed $e^h \in \mathbb R^n$, $e^H(h)$ is chosen such that
	\begin{align*}
	\min_{e^H \in \mathbb R^{n_c}} \|e^h - I_H^h e^H\|_{0,A}^2
	= \|e^h - I_H^h e^H(h)\|_{0,A}^2 .
\end{align*}
	Then, for any $e^{h}\in \mathbb R^{n}$
	\begin{align*}
			&\min_{e^H \in \mathbb R^{n_c}} \|e^h - I_H^h e^H\|_{0,B}^2\\
			&\leq  \|e^h - I_H^h e^H(h)\|_{0,B}^2\\
			&\leq \left(1+\frac{ d_{\max}}{a_{\min}}\right)\|e^{h}-I^{h}_{H}e^{H}(h)\|^{2}_{0,A}\\
		&	\leq  \left(1+\frac{ d_{\max}}{a_{\min}}\right)\beta ||e^{h}||^{2}_{1,A}\\
			&\leq \left(1+\frac{ d_{\max}}{a_{\min}}\right)\beta ||e^{h}||^{2}_{1,B}.
	\end{align*}
The proof is completed.	
\end{proof}

The convergence of the TGM for $A_{N}$ has been analyzed previously. For the case $d(x,y,t)\geq 0$, although $A_{N}^{(m)}$ may no longer be a BTTB matrix, the coefficient matrix at each time level can be expressed as $A^{(m)}_{N}=A_{N}+\frac{1}{\eta_{1}}D^{(m)}$, and it follows from Lemma \ref{yl4.4} that

\begin{cor}\rm \label{tl1} Let $A^{(m)}_{N}$ be given by  \eqref{eq5}
	. If the coefficient  function $d(x,y,t)$ is nonnegative and uniformly bounded, then there exists a constant $\alpha$ independent of the  time $m$ such that the weighted Jacobi smoother $G$ satisfies \eqref{mgsz}.
	
	\end{cor}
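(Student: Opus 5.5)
The plan is to reduce the corollary to the Ruge--Stüben smoothing lemma (Lemma 3.2), applied at every time level with a single relaxation parameter. The uniformity in $m$ will come from Lemma \ref{yl4.4}. Write $A^{(m)}_{N}=A_{N}+\frac{1}{\eta_{1}}D^{(m)}$. Since $d\ge 0$, the perturbation $\frac{1}{\eta_1}D^{(m)}$ is a nonnegative diagonal matrix and $A_N$ is symmetric positive definite. Lemma \ref{yl4.4} then gives, for every $m$,
\[
\rho\bigl(\operatorname{diag}(A^{(m)}_{N})^{-1}A^{(m)}_{N}\bigr)\le \rho\bigl(\operatorname{diag}(A_{N})^{-1}A_{N}\bigr)=\frac{\lambda_{\max}(A_N)}{a_0}\le \frac{\|f_{\gamma_1,\gamma_2}\|_\infty}{a_0},
\]
where the last inequality is Lemma \ref{lm3.2}. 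Hence any fixed $\omega\in\bigl(0,2a_0/\|f_{\gamma_1,\gamma_2}\|_\infty\bigr)$ lies in the admissible interval of Lemma 3.2 for every $A^{(m)}_N$ at once.

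Lemma 3.2 only asserts that some $\alpha_m>0$ exists for each $m$, so the remaining step is to make $\alpha$ independent of $m$. Here I will not rely on the lemma as a black box but use the explicit computation behind it. Write $\widehat D_m=\operatorname{diag}(A_N^{(m)})$ and $e'=\omega\widehat D_m^{-1}A^{(m)}_N e$. Then
\[
\|Ge\|_{1,A^{(m)}_N}^2=\|e\|_{1,A^{(m)}_N}^2-2\omega\|e\|^2_{2,A^{(m)}_N}+\|e'\|^2_{1,A^{(m)}_N}.
\]
For the last term I will use $\|e'\|^2_{1,A^{(m)}_N}\le \rho_m\,\omega^2\|e\|^2_{2,A^{(m)}_N}$, where $\rho_m=\rho(\widehat D_m^{-1}A^{(m)}_N)$. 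This bound follows from $A^{(m)}_N\le \rho_m \widehat D_m$ together with $\langle \widehat D_m e',e'\rangle=\omega^2\|e\|^2_{2,A^{(m)}_N}$. Combining the two gives (\ref{mgsz}) with
\[
\alpha_m=\omega(2-\omega\rho_m)\ \ge\ \omega\Bigl(2-\omega\frac{\|f_{\gamma_1,\gamma_2}\|_\infty}{a_0}\Bigr)=:\alpha>0,
\]
and this lower bound does not depend on $m$.

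The main obstacle is this uniformity step. The existence statement in Lemma 3.2 does not by itself exclude $\alpha_m\to 0$, and the explicit form $\omega(2-\omega\rho_m)$ is what rules that out. Only the bound $\rho_m\le\rho(\operatorname{diag}(A_N)^{-1}A_N)$ from Lemma \ref{yl4.4} is actually needed, and the uniform boundedness of $d$ is not required for this inequality. I will note that boundedness of $d$ matters for the approximation constant in Lemma \ref{xyl2}, through the factor $d_{\max}/a_{\min}$, but not for the smoothing property.
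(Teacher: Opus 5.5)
Your proof is correct and follows the paper's argument: Lemma \ref{yl4.4} together with Lemma \ref{lm3.2} gives the $m$-independent admissible interval $\bigl(0,2a_0/\|f_{\gamma_1,\gamma_2}\|_\infty\bigr)$ for every $A^{(m)}_N$. Your explicit estimate $\alpha_m=\omega(2-\omega\rho_m)\ge\omega\bigl(2-\omega\|f_{\gamma_1,\gamma_2}\|_\infty/a_0\bigr)$ rigorously establishes the uniformity in $m$, which the paper only asserts from the $m$-independence of that interval, and you are right that this corollary needs only $d\ge 0$, not boundedness of $d$.
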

\begin{proof}
Since $\frac{1}{\eta_{1}} D^{(m)}$ is nonnegative for any $1\leq m\leq M$. By Lemma \eqref{yl4.4}, we have
	\begin{align*}
\rho (\mathrm{diag}(A^{(m)}_{N})^{-1}A^{(m)}_{N})\leq \rho (\mathrm{diag}(A_{N})^{-1}A_{N}).
	\end{align*}
	Moreover, 
	\begin{align*}
	0<\frac{2a_{0}}{\|f_{\gamma_{1},\gamma_{2}}\|_{\infty}}\leq \frac{2}{\rho(\mathrm{diag}(A_{N})^{-1}A_{N})}\leq \frac{2}{\rho(\mathrm{diag}(A^{(m)}_{N})^{-1}A^{(m)}_{N})},
	\end{align*}
	and \(\frac{2a_{0}}{\|f_{\gamma_{1},\gamma_{2}}\|}\) is independent of the  time level \(m\), there always exists an $\alpha$  independent of \(m\) such that \eqref{mgsz} holds.
	 The Proof is completed.
\end{proof}

\begin{cor}\rm Under the conditions of Corollary \ref{tl1}, there exists a constant $\beta$ independent of the time $m$ such that the prolongation operator defined by \eqref{tssz} satisfies \eqref{ytsz}.	
\end{cor}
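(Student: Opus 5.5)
The plan is to combine the BTTB approximation result for $A_N$ (the theorem proved via the lemma from \cite{SHW_2001}) with the perturbation Lemma \ref{xyl2}. We write $A^{(m)}_{N}=A_{N}+\frac{1}{\eta_{1}}D^{(m)}$ and treat $\frac{1}{\eta_{1}}D^{(m)}$ as the nonnegative diagonal perturbation $D$ in Lemma \ref{xyl2}.

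First, by the preceding theorem, there is a constant $\beta_{0}>0$ such that
\[
\min_{e^{H}}\|e^{h}-I^{h}_{H}e^{H}\|^{2}_{0,A_{N}}\le \beta_{0}\|e^{h}\|^{2}_{1,A_{N}}
\]
for all $e^{h}$, where $I^{h}_{H}$ is given by \eqref{czsz}. This $\beta_{0}$ depends only on $A_{N}$, which is independent of $m$.

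Second, we apply Lemma \ref{xyl2} with $A=A_{N}$ and $D=\frac{1}{\eta_{1}}D^{(m)}$. Since $d\ge 0$, this $D$ is a nonnegative diagonal matrix. Every diagonal entry of $A_{N}$ equals $a_{0}=-2g^{(\gamma_{1})}_{1}-\frac{2\eta_{2}}{\eta_{1}}g^{(\gamma_{2})}_{1}=2\gamma_{1}+2\frac{\eta_{2}}{\eta_{1}}\gamma_{2}>0$, so $a_{\min}=a_{0}$. This gives \eqref{ytsz} for $A^{(m)}_{N}$ with
\[
\beta_{m}=\beta_{0}\Bigl(1+\frac{\max_{i,j}d^{(m)}_{i,j}}{\eta_{1}a_{0}}\Bigr).
\]

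Third, we remove the $m$-dependence. Let $\bar d=\sup_{x,y,t}d(x,y,t)<\infty$, which is finite because $d$ is uniformly bounded. Then $\max_{i,j}d^{(m)}_{i,j}\le\bar d$ for every $m$, so
\[
\beta:=\beta_{0}\Bigl(1+\frac{\bar d}{\eta_{1}a_{0}}\Bigr)
\]
works uniformly in $1\le m\le M$. The remaining care is to check that every quantity entering $\beta$ ($\beta_{0}$, $\eta_{1}$, $\eta_{2}$, $a_{0}$) is fixed once the mesh is fixed and does not depend on the time level. This holds because $\eta_{1}$ and $\eta_{2}$ depend only on $\Delta t$, $\Delta x$ and $\Delta y$.

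The statement as written only asks for independence from $m$, and the argument above delivers exactly that. If one also wanted $\beta$ uniform in $N$, the factor $\frac{1}{\eta_{1}}\propto\Delta x^{\gamma_{1}}/\Delta t$ would need separate control, but that is not required here.
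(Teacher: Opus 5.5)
Your proposal is correct and is essentially the paper's own argument. Like the paper, you take $\beta_0$ for $A_N$ from the preceding theorem and apply Lemma \ref{xyl2} with the nonnegative diagonal perturbation $\frac{1}{\eta_1}D^{(m)}$ and $a_{\min}=a_0$. A uniform bound on $d$ then yields $\beta=\beta_0\bigl(1+\frac{\hat d}{\eta_1 a_0}\bigr)$, which is independent of $m$.
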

\begin{proof}
	By Theorem 3.2, there exists a positive constant $\beta_0$,  such that
	\begin{align*}
		\min_{e^{H}\in \mathbb{R}^{n_{c}}} \|e^{h} - I^{h}_{H} e^{H}\|^{2}_{0,A_{N}} \leq \beta_{0} \|e^{h}\|^{2}_{1,A_{N}}, \quad \forall e^{h}\in \mathbb{R}^{n}.
	\end{align*}
	By the uniform boundedness of $d(x,y,t)$, there exists a positive constant $\hat d$ such that
	\[
	d(x,y,t) \le \hat d \quad \text{for all } (x,y,t)\in \Omega\times(0,T).
	\]
It follows from Lemma \ref{xyl2} that
\begin{align*}
		\min_{e^{H}\in \mathbb{R}^{n_{c}}} \|e^{h} - I^{h}_{H} e^{H}\|^{2}_{0,A^{(m)}_{N}} \leq \beta_{0}\left(1+\frac{\hat d}{\eta_{1} a_{0}}\right) \|e^{h}\|^{2}_{1,A^{(m)}_{N}}, \quad \forall e^{h}\in \mathbb{R}^{n}.
\end{align*}	
Let
$\beta = \beta_0 \left(1 + \frac{\hat d}{\eta_{1} a_0}\right)$.
	Then $\beta$ does not depend on  the time instant $m$.

\noindent	The proof is completed.
\end{proof}

\begin{thm}\rm \label{TH3.3}
	Let $A^{(m)}_{N}$ be given by \eqref{eq5}. If the coefficient function $d(x,y,t)$ is nonnegative and uniformly bounded, $G$ is the weighted Jacobi method with $\omega \in (0,\frac{2a_{0}}{\|f_{\gamma_{1},\gamma_{2}}\|_{\infty}})$, and $I^{h}_{H}$ is as in (\ref{tssz}), then the TGM method converges.
\end{thm}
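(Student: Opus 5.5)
The plan is to check the two hypotheses of Lemma \ref{zyyl} for each matrix $A^{(m)}_{N}$, $1\le m\le M$, and then let Lemma \ref{zyyl} give the conclusion.

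First, the structural assumptions. By construction $A^{(m)}_{N}=A_{N}+\frac{1}{\eta_{1}}D^{(m)}$. Here $A_{N}$ is symmetric positive definite, and $\frac{1}{\eta_{1}}D^{(m)}$ is a nonnegative diagonal matrix because $d\ge 0$ and $\eta_{1}>0$. Hence $A^{(m)}_{N}$ is symmetric positive definite. The restriction $I^{H}_{h}=P\otimes P$ has full row rank, since $P$ has full row rank: its rows have staggered leading entries. So $I^{h}_{H}=P^{T}\otimes P^{T}$ has full column rank, with $n_{c}=[\frac{N}{2}]^{2}<n$.

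Next, the smoothing property. Take $\omega\in(0,\frac{2a_{0}}{\|f_{\gamma_{1},\gamma_{2}}\|_{\infty}})$. Corollary \ref{tl1} applies: it uses Lemma \ref{yl4.4} and the chain
\[
\frac{2a_{0}}{\|f_{\gamma_{1},\gamma_{2}}\|_{\infty}}\le \frac{2}{\rho(\mathrm{diag}(A^{(m)}_{N})^{-1}A^{(m)}_{N})}
\]
to place $\omega$ inside the admissible interval of Lemma 3.2. This gives an $\alpha>0$ such that \eqref{mgsz} holds for $G=I_{n}-\omega\,\mathrm{diag}(A^{(m)}_{N})^{-1}A^{(m)}_{N}$.

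Then the approximation property. The corollary following Corollary \ref{tl1} builds on Theorem 3.2 and Lemma \ref{xyl2}. With $\hat d$ a uniform upper bound of $d$, it gives $\beta=\beta_{0}\bigl(1+\frac{\hat d}{\eta_{1}a_{0}}\bigr)$ such that \eqref{ytsz} holds with $A$ replaced by $A^{(m)}_{N}$.

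With both properties in hand, Lemma \ref{zyyl} yields
\[
\rho(E_{TG})\le\|E_{TG}\|_{1,A^{(m)}_{N}}\le\sqrt{1-\frac{\alpha}{\beta}}<1
\]
at every time level. Therefore the TGM converges for every $m$.

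The main point to watch is uniformity. The convergence factor bound $\sqrt{1-\alpha/\beta}$ should not depend on $m$, and for that I need $\alpha$ to be genuinely independent of $m$. The Ruge--Stüben proof of Lemma 3.2 gives $\alpha=\omega(2-\omega\rho)$, with $\rho=\rho(\mathrm{diag}(A^{(m)}_{N})^{-1}A^{(m)}_{N})$. To make this $m$-independent, I would use $\rho\le\|f_{\gamma_{1},\gamma_{2}}\|_{\infty}/a_{0}$, which follows from Lemma \ref{yl4.4} and Lemma \ref{lm3.2}. That gives the $m$-independent lower bound
\[
\alpha\ge\omega\Bigl(2-\frac{\omega\|f_{\gamma_{1},\gamma_{2}}\|_{\infty}}{a_{0}}\Bigr)>0.
\]
Convergence itself only needs $\alpha/\beta>0$ at each level, so this uniformity is a strengthening rather than an essential obstacle.
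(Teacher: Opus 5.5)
Your proposal is correct and follows the paper's route: Theorem \ref{TH3.3} comes from feeding Corollary \ref{tl1} (smoothing property, via Lemma \ref{yl4.4}) and the corollary after it (approximation property, via Lemma \ref{xyl2}) into the Ruge--St\"uben bound of Lemma \ref{zyyl}. Your extra checks (positive definiteness of $A^{(m)}_{N}$, full column rank of $I^{h}_{H}$, and the explicit $m$-independent constant $\alpha\ge\omega\bigl(2-\omega\|f_{\gamma_{1},\gamma_{2}}\|_{\infty}/a_{0}\bigr)$) just make explicit what the paper leaves implicit.
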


\begin{cor}\rm
Under the conditions of Theorem \ref{TH3.3}, $\omega\in(0,1]$ serves as a sufficient condition for the convergence of the TGM with weighted Jacobi as the smoother when solving \eqref{eq5}.
\end{cor}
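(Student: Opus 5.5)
The plan is to show that the admissible interval of Theorem \ref{TH3.3} contains $(0,1]$, that is,
\[
\|f_{\gamma_{1},\gamma_{2}}\|_{\infty}<2a_{0},\qquad a_{0}=-2g^{(\gamma_{1})}_{1}-\frac{2\eta_{2}}{\eta_{1}}g^{(\gamma_{2})}_{1}=2\gamma_{1}+2\eta\gamma_{2},\quad \eta=\frac{\eta_{2}}{\eta_{1}}>0 .
\]
Once this holds, $\frac{2a_{0}}{\|f_{\gamma_{1},\gamma_{2}}\|_{\infty}}>1$, so every $\omega\in(0,1]$ satisfies the hypothesis of Theorem \ref{TH3.3}, and the TGM converges.

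Since $f_{\gamma_{1},\gamma_{2}}=f_{\gamma_{1}}(\theta_{1})+\eta f_{\gamma_{2}}(\theta_{2})$, it suffices to prove a one-dimensional bound:
\[
0\le f_{\gamma}(\theta)<4\gamma \quad\text{for all } \theta\in[-\pi,\pi],\ \gamma\in(1,2).
\]
The lower bound $f_\gamma\ge 0$ was already shown in the approximation-property proof, where $f_\gamma(\theta)\geq -2^{\gamma}\cos(\gamma\pi/2)(1-\cos\theta)\ge0$. It implies that $\|f_{\gamma_{1},\gamma_{2}}\|_{\infty}=\max f_{\gamma_{1},\gamma_{2}}$. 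Adding the one-dimensional upper bounds then gives $\max f_{\gamma_{1},\gamma_{2}}<4\gamma_{1}+4\eta\gamma_{2}=2a_{0}$.

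For the upper bound I use the Fourier expansion from the proof of the generating-function lemma:
\[
f_{\gamma}(\theta)=f_{0}+2\sum_{k\ge1}f_{k}\cos(k\theta),\qquad f_{0}=-2g^{(\gamma)}_{1}=2\gamma,\quad f_{1}=-(g^{(\gamma)}_{0}+g^{(\gamma)}_{2}),\quad f_{k}=-g^{(\gamma)}_{k+1}\ (k\ge2).
\]
By Lemma \ref{yl1}, every $f_{k}$ with $k\ge1$ is strictly negative. The identity $\sum_{k\neq1}g^{(\gamma)}_{k}=\gamma$ gives $\sum_{k\ge1}|f_{k}|=\gamma$, and this series converges absolutely. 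Bounding each $\cos(k\theta)\ge-1$ yields $f_{\gamma}(\theta)\le 2\gamma+2\gamma=4\gamma$, which is the strict diagonal-dominance bound expressed at the level of the symbol.

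The main obstacle is strictness. Equality at some $\theta$ would force $\cos(k\theta)=-1$ for every $k\ge1$, because all the $f_{k}$ are nonzero. This is impossible: $k=1$ requires $\theta=\pm\pi$, but then $\cos 2\theta=1$. Since $f_{\gamma}$ is continuous on the compact set $[-\pi,\pi]$, its maximum is attained at some point, and that maximum is therefore strictly below $4\gamma$.

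As a cross-check, I expect the maximum to be at $\theta=\pi$, where $f_{\gamma}(\pi)=2^{\gamma+1}$. By strict convexity of $2^{\gamma+1}$ in $\gamma$, this lies strictly below the chord $4\gamma$ on $(1,2)$, which is consistent with the bound.
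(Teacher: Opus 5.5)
Your proof is correct and complete, but it reaches the key inequality $\|f_{\gamma_1,\gamma_2}\|_\infty<2a_0$ by a different route from the paper's.

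\textbf{The paper's route.} It evaluates the sup norm exactly. Writing $\eta=\eta_2/\eta_1$, it asserts as ``a direct calculation'' that $\|f_{\gamma_1,\gamma_2}\|_\infty=2^{\gamma_1+1}+\eta\,2^{\gamma_2+1}$, i.e.\ that each $f_\gamma$ peaks at $\theta=\pi$. It then concludes from $2^{\gamma+1}<4\gamma$ on $(1,2)$, which is exactly the convexity fact you kept only as a cross-check.

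\textbf{Your route.} You bound the symbol through its Fourier coefficients. The signs $f_k<0$ and the sum rule $\sum_{k\ge1}|f_k|=\gamma$ give $f_\gamma\le 4\gamma$; this is the symbol-level form of the strict diagonal dominance of $T^{\gamma}_N$. Your observation that $\cos\theta=-1$ forces $\cos2\theta=1$, together with compactness, then makes the bound strict.

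That strictness step is genuinely needed. Gershgorin on the finite sections $T^{(2)}_n(f_{\gamma_1,\gamma_2})$ gives $\lambda_{\max}<2a_0$ for each $n$, but only the non-strict inequality survives in the limit that recovers $\|f_{\gamma_1,\gamma_2}\|_\infty$.

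\textbf{What each route buys.} Your argument never has to locate the maximum, which is the one step the paper leaves unjustified. The claim is true: on $[0,\pi]$ both $\sin^\gamma(\theta/2)$ and $-\cos\phi_\gamma(\theta)$ are nonnegative and nondecreasing, because $\phi_\gamma$ increases from $\gamma\pi/2>\pi/2$ to $\pi$. The paper does not say this, however. Your argument also uses only the sign pattern and the sum rule from Lemma \ref{yl1}, not the closed form of $f_\gamma$.

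In exchange, the paper's route yields the explicit threshold $\frac{4\gamma_1+4\eta\gamma_2}{2^{\gamma_1+1}+\eta\,2^{\gamma_2+1}}$. This is what produces the values $1.0417$ and $1.0615$ quoted in the example of Section \ref{TGM} and in Figure \ref{energy}. Your argument certifies only that the threshold exceeds $1$, with no quantified margin. If you promote your cross-check to a proof via the monotonicity observation above, you recover that explicit formula as well.
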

\begin{proof}
	We only need to prove that
	\begin{align*}
	\frac{2a_0}{\left\|f_{\gamma_1,\gamma_2}\right\|_\infty}>1
	\end{align*}
	for any \((\gamma_1,\gamma_2)\in(1,2)\). A direct calculation gives
	\begin{align*}
		\frac{2a_0}{\left\|f_{\gamma_1,\gamma_2}\right\|_\infty}
		&= \frac{-4g_1^{(\gamma_1)} - 4\frac{\eta_2}{\eta_1}g_1^{(\gamma_2)}}
		{\left\| f_{\gamma_1}(\theta_1) + \frac{\eta_2}{\eta_1}f_{\gamma_2}(\theta_2) \right\|_\infty} = \frac{4\gamma_1 + 4\frac{\eta_2}{\eta_1}\gamma_2}
		{2^{\gamma_1+1}
		+	 \frac{\eta_2}{\eta_1}2^{\gamma_2+1}} > 1.
	\end{align*}
	The Proof is completed.
\end{proof}

The above corollary indicates that regardless of how the conditions of equation \eqref{new1} vary, as long as the relaxation parameter  \(\omega \in (0,1]\), the TGM is always convergent, which provides a conservative but safe interval for parameter selection. Furthermore, the surface of \(\frac{2a_{0}}{\|f_{\gamma_{1},\gamma_{2}}\|_{\infty}}\) corresponding to different values of \((\gamma_{1},\gamma_{2})\) when \(N=64\) is shown in Figure \ref{energy}. It can be observed that the surface representing the values of \(\frac{2a_{0}}{\|f_{\gamma_{1},\gamma_{2}}\|_{\infty}}\) lies strictly above the light blue plane, and \(\frac{2a_{0}}{\|f_{\gamma_{1},\gamma_{2}}\|_{\infty}}\) achieves an approximate maximum value of \(1.0615\) when \((\gamma_{1},\gamma_{2})=(1.445,1.445)\). In the following, we use an example to show that the choice of \(\omega\) will significantly affect the iteration efficiency of the TGM.

\begin{figure}[h]
	\begin{minipage}{1.0 \linewidth}
		\centering
		\includegraphics[width=4in]{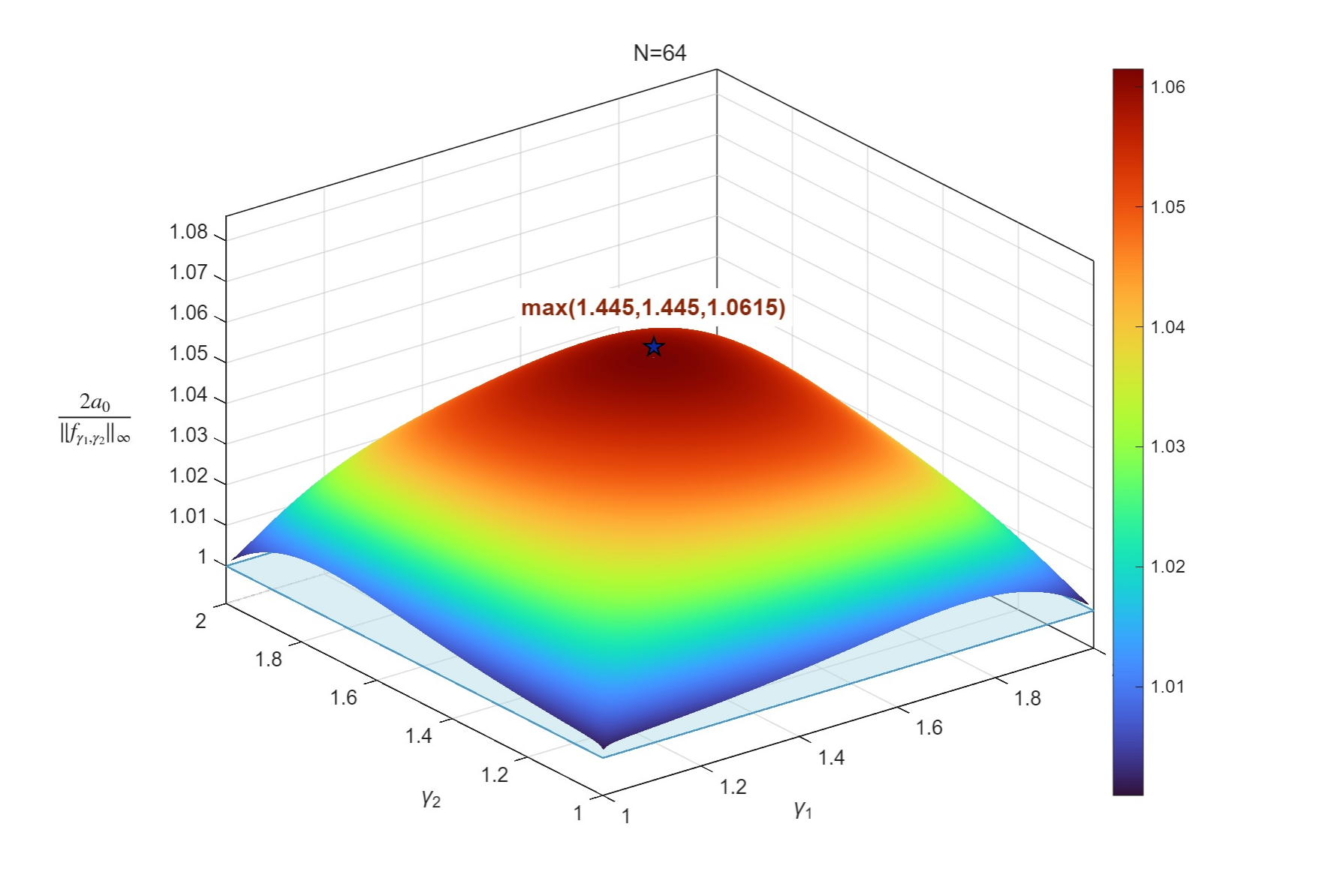}
	\end{minipage}
	\caption{
	The range of  $\frac{2a_{0}}{\|f_{\gamma_{1},\gamma_{2}}\|_{\infty}}$ when  $N=64$.
		 }\label{energy}
\end{figure}

	\begin{example}\rm
	Consider a two-dimensional Riesz space fractional diffusion equation with \(T=1\) and \(\Omega=(0,1)^2\). The exact solution is taken as
	\[
	u=e^{-t}(x-x^2)^2(y-y^2)^2,
	\]
	and the source term \(f\) is derived from the exact solution. Take \(\gamma_1=1.1\), \(\gamma_2=1.7\), \(N=64\), \(M=4\), and consider the linear system at the first time level. We apply the TGM to solve the linear system obtained from discretization, terminate the iteration when
	\[
	\frac{\|Au-b\|_\infty}{\|b\|_\infty}<10^{-7},
	\]
	and count the number of cycles. Specifically,
	
	(1): The \(d(x,y,t)=0\): We set \(\omega=0.0001\) as the initial value and traverse to \(\omega=1.1\) with a step size of \(0.0001\). Denoting one TGM iteration as one cycle, we record the cycle counts required to reach the stopping tolerance for different values of \(\omega\), and plot the corresponding iteration curve, namely Fig.~\ref{fig:1a}. A direct calculation gives
	\[
	\frac{2a_0}{\|f_{\gamma_1,\gamma_2}\|_\infty}\approx 1.0417,
	\]
	which is marked by a red line in Fig.~\ref{fig:1a}.
	
	\begin{figure}[h]
		\centering
		\begin{subfigure}[b]{0.5\linewidth}
			\centering
			\includegraphics[width=\linewidth]{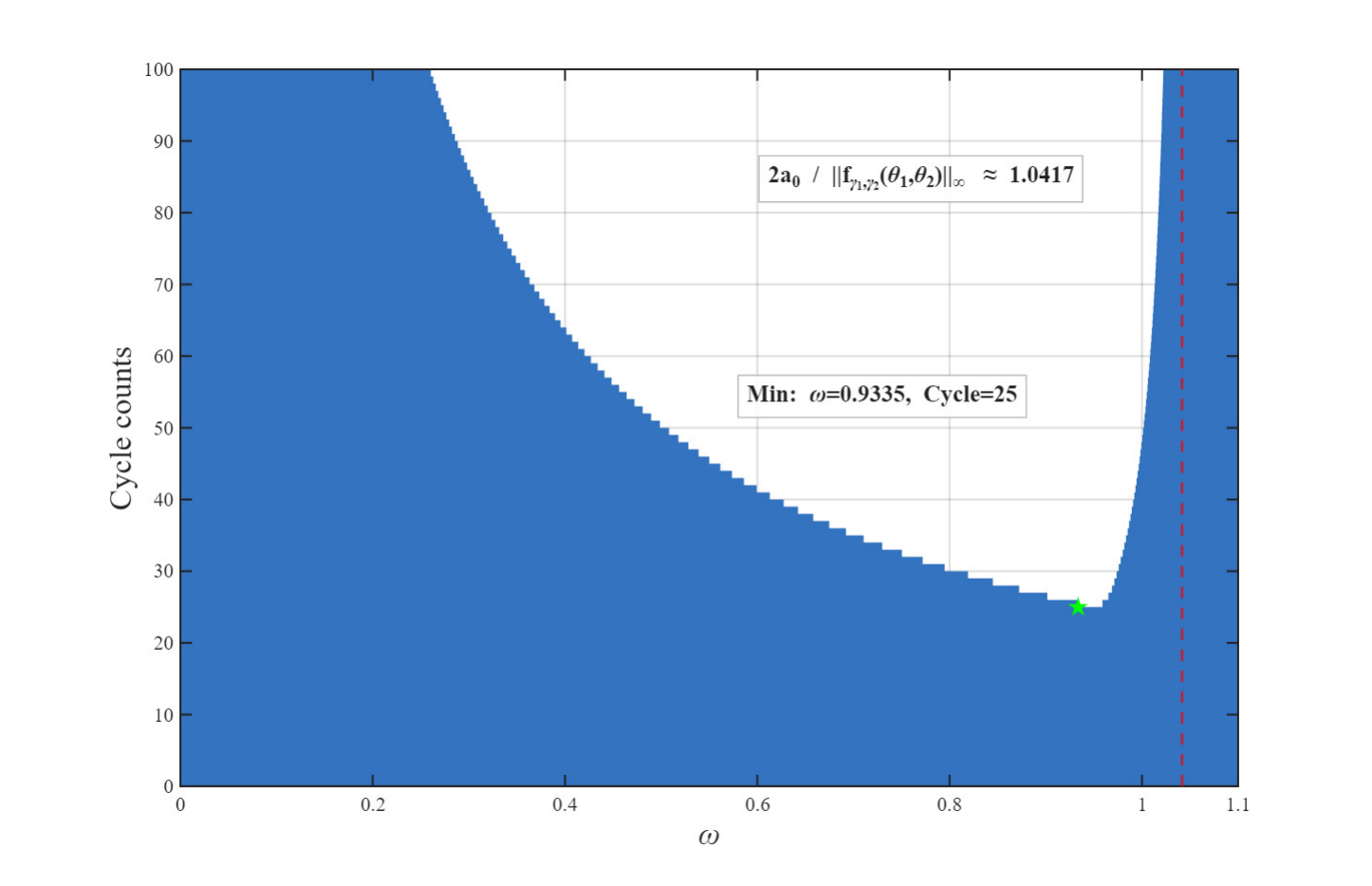}
			\caption{The relationship between \(\omega\) and the cycle counts}
			\label{fig:1a}
		\end{subfigure}
		\hfill
		\begin{subfigure}[b]{0.49\linewidth}
			\centering
			\includegraphics[width=\linewidth]{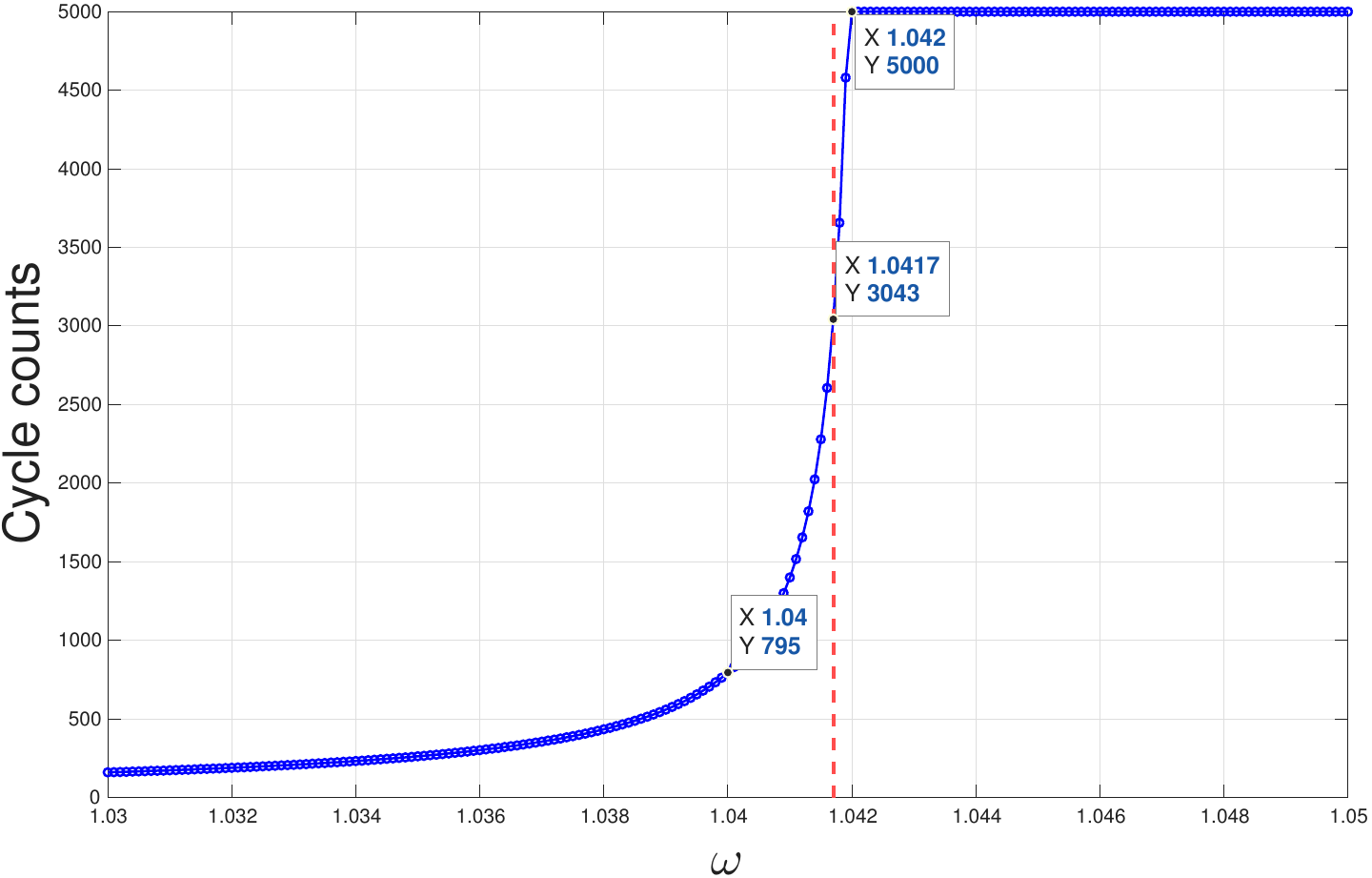}
			\caption{The iterative behavior near \(\omega=1.0417\)}
			\label{fig:1b}
		\end{subfigure}
		\caption{The iteration curves of the TGM for \(d(x,y,t)=0\).}
		\label{fig:1}
	\end{figure}
	
	As can be seen from Fig.~\ref{fig:1a}, the choice of the relaxation parameter \(\omega\) significantly affects the iteration efficiency of the TGM. For example, when \(\omega=0.9335\), the required number of cycles is the smallest, only 25; whereas in other regions, the corresponding cycle counts increase markedly, even exceeding 100. In addition, all values of \(\omega\) to the left of the red line ensure the convergence of the TGM, which verifies the effectiveness of Theorem~\ref{zyyl}.
	
	Fig.~\ref{fig:1b} shows the steep growth phenomenon near \(\omega=1.0417\): when \(\omega=1.04\), the number of cycles required to reach the convergence tolerance is 795; when \(\omega=1.0417\), the required number of cycles increases sharply to 3043; and when \(\omega=1.042\), the TGM still fails to converge after more than 5000 iterations. These phenomena indicate that the choice of \(\omega\) has a great influence on both the convergence and the convergence speed of the TGM, so selecting an appropriate \(\omega\) is necessary for accelerating convergence.
	
	(2): \(d(x,y,t)=(1+x)^7(1+y)^7\):
	\begin{figure}[h]
		\centering
		\begin{subfigure}[b]{0.5\linewidth}
			\centering
			\includegraphics[width=\linewidth]{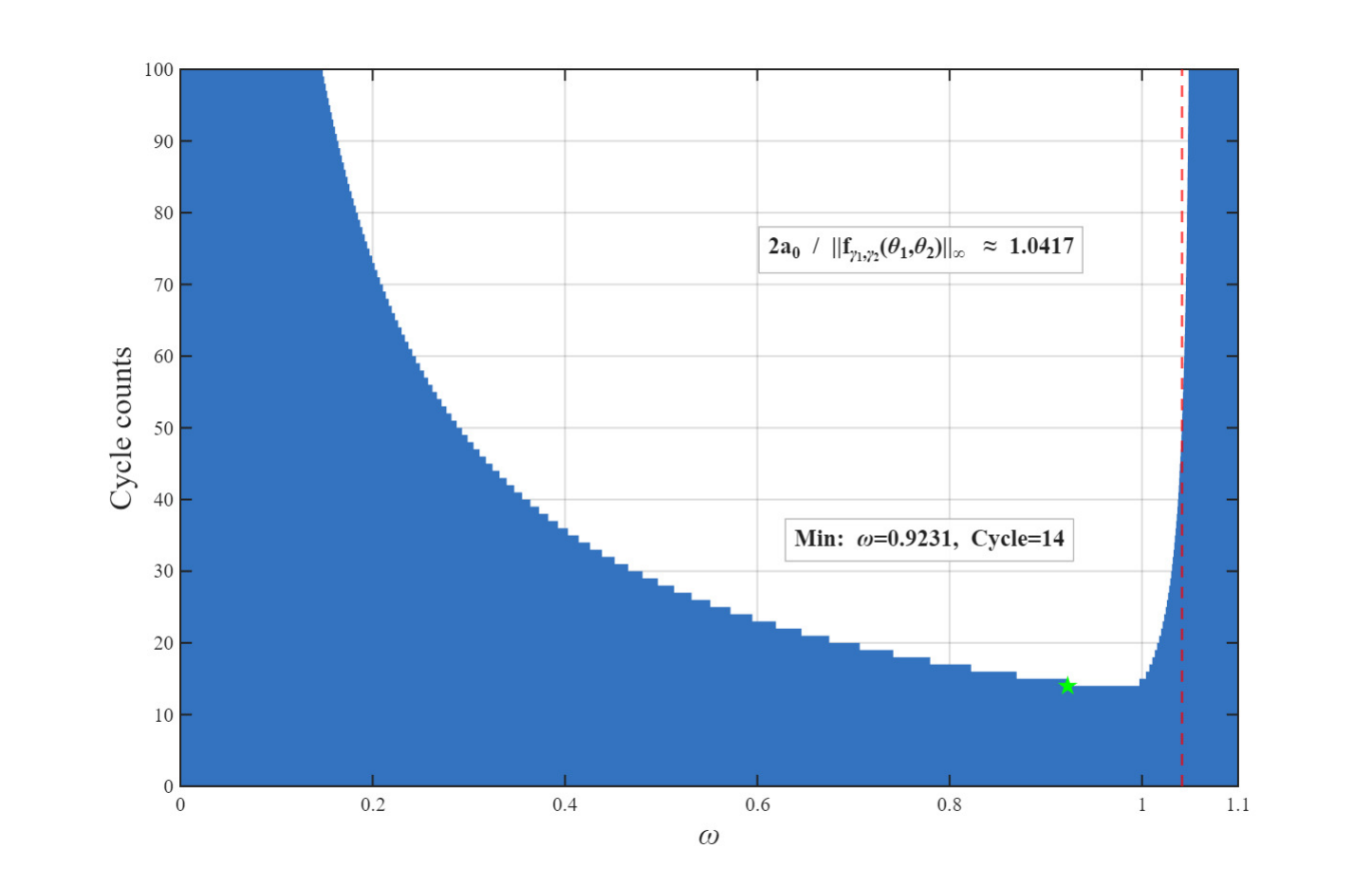}
			\caption{The relationship between \(\omega\) and the cycle counts}
			\label{fig:2a}
		\end{subfigure}
		\hfill
		\begin{subfigure}[b]{0.49\linewidth}
			\centering
			\includegraphics[width=\linewidth]{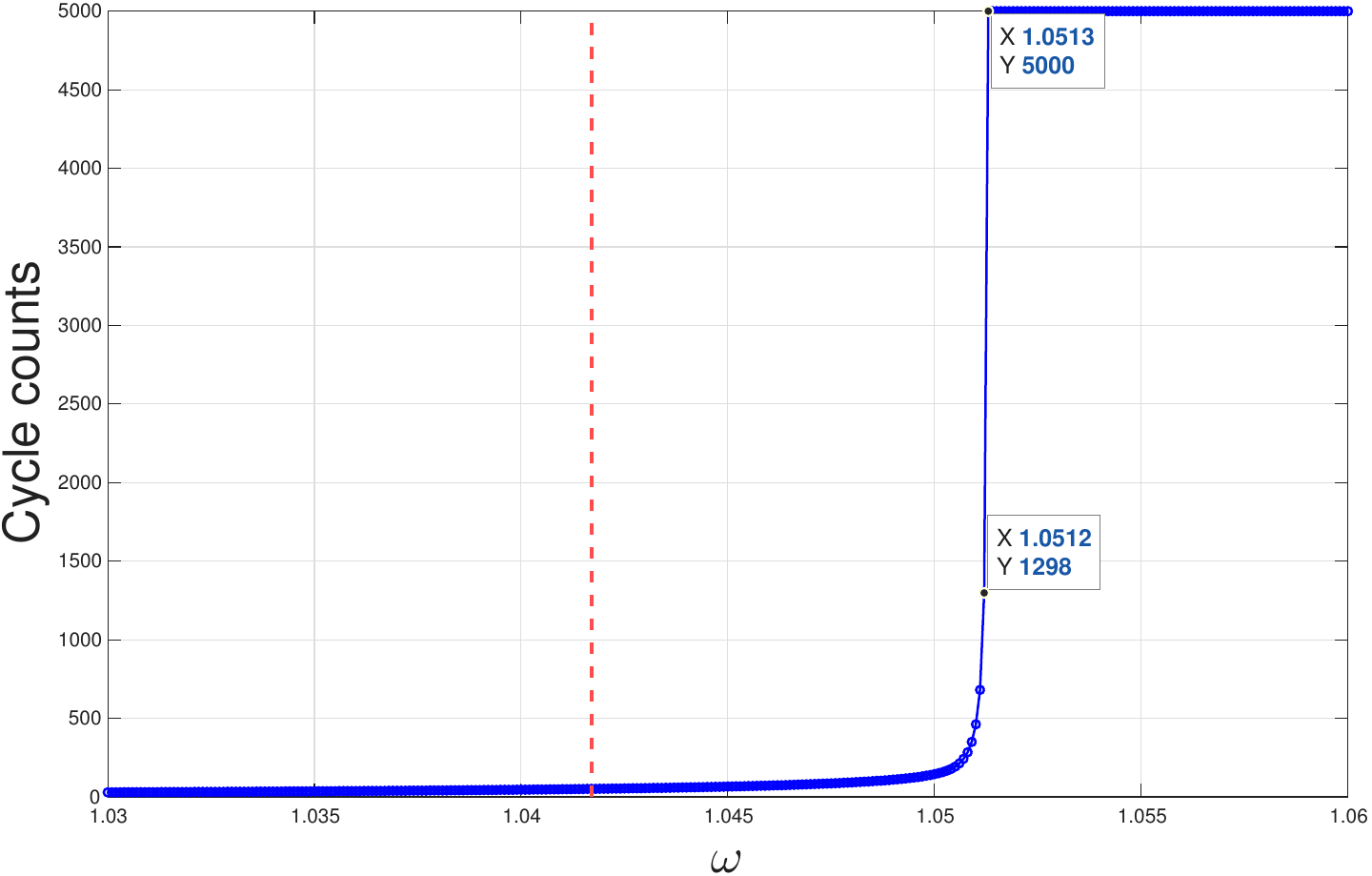}
			\caption{The iterative behavior near \(\omega=1.0417\)}
			\label{fig:2b}
		\end{subfigure}
		\caption{The iteration curves of the TGM for \(d(x,y,t)=(1+x)^7(1+y)^7\).}
		\label{fig:2}
	\end{figure}	
	Similar to the results obtained in (1), Fig.~\ref{fig:2a} shows a sweep of the number of cycles required to reach convergence accuracy under different values of \(\omega\). It can be seen that the number of iterations is minimized at \(\omega=0.9231\), requiring only 14 cycles.	In Fig.~\ref{fig:1b}, the parameters to the right of \(\omega=1.0417\) basically make the number of iterations required by TGM exceed 5000, whereas in Fig.~\ref{fig:2b}, the parameters to the left of \(\omega=1.0513\) all allow TGM to converge. It can be seen that the boundary parameter that ensures the smoothing property is shifted noticeably to the right, which verifies the validity of Corollary~\ref{tl1}. In addition, when \(\omega=1.0512\), the required number of cycles is 1298, while when \(\omega=1.0513\), the iteration still fails to converge after more than 5000 steps. This steep growth phenomenon is far more pronounced than in the case of \(d(x,y,t)=0\), reflecting that the parameter variation is more complicated when \(d(x,y,t)\geq 0\).

\end{example}

\section{Data-driven acceleration strategy}\label{GPR}
As the discretization number \(N\) increases, the discretization of the partial differential equation yields a family of matrices. When solving with parameterized iterative methods, there exists an optimal parameter for each problem size, which can be obtained via fine step grid search and corresponds to the minimal computational cost. As \(N\) continues to grow, properties such as the matrix spectrum change, and these optimal parameters usually fluctuate. Such fluctuations generally exhibit nonlinear characteristics and cannot be characterized by rigorous mathematical expressions, determining them via exact eigenvalue computations incurs a high computational cost\cite{STT}.

Gaussian process regression (GPR) provides an effective data-driven strategy for learning such parameters: it uses kernel functions to model the varying relationship between problem sizes and quasi-optimal parameters, and predicts near-optimal parameters at a low computational cost. Assume that we have a training set containing $d$ data:
\[
\mathcal{D}=\left\{\left(N_{i},\omega_{i}\right)\,\middle|\,i=1,2,\ldots,d\right\}=:\left\{\boldsymbol{N},\boldsymbol{\omega}\right\},
\]
where the pair $(N_{i},\omega_{i})$ denotes the $N_i$-th mesh partition number and the corresponding optimal relaxation parameter. The goal of Gaussian process regression is to establish a mapping between the
input set $\boldsymbol{N}$ and the output set $\boldsymbol{\omega}$, represented as
$\mathcal{F}(N):\mathbb{R}\rightarrow\mathbb{R}$, and to forecast the potential output value
$\tilde{\omega}_{*}=\mathcal{F}(N_{*})$ at a new test point $N_{*}$.
In practical linear regression scenarios, we model the observed value \(\omega\) as the result of a latent function $\mathcal{F}(N)$ perturbed by additive noise $\xi$, i.e.,
\begin{equation}
\omega = \mathcal{F}(N) + \xi.
\end{equation}
Suppose  that $\xi$ follows a Gaussian distribution with a zero mean and variance
$\sigma^{2}$, that is $\xi\sim\mathcal{N}\big(0,\sigma^{2}\big)$. The common
range for $\sigma$ is $[10^{-6},10^{-2}]$. For this investigation, we have set
$\sigma=10^{-4}$. 
 Without loss of generality, assume true function $\mathcal{F}(N)$ follows a zero mean Gaussian process:
\begin{equation*}
	\begin{bmatrix}
		\mathcal{F}(N_{1})\\
		\vdots\\
		\mathcal{F}(N_{d})
	\end{bmatrix}
	\sim
	\mathcal{N}\left(
	\begin{bmatrix}
		0\\
		\vdots\\
		0
	\end{bmatrix},
	\begin{bmatrix}
		\kappa\big(N_{1},N_{1}\big) & \cdots & \kappa\big(N_{1},N_{d}\big)\\
		\vdots & \ddots & \vdots\\
		\kappa\big(N_{d},N_{1}\big) & \cdots & \kappa\big(N_{d},N_{d}\big)
	\end{bmatrix}
	\right),
\end{equation*}
where	$\kappa(N_{i},N_{j})$ defines the covariance between the function values corresponding to the input points $N_{i}$ and $N_{j}$.
The kernel function plays a pivotal role in GPR, as it is responsible for generating the covariance matrix that quantifies the similarity between any two input variables. The closer the distance between inputs, the higher the correlation between their corresponding output variables. Hence, it is essential to select or construct kernel functions that meet the specific demands of the problem at hand. In this paper, we consider the Gaussian kernel function, which is given by
\begin{align*}
\kappa(N_{i}, N_{j}) = \delta_{\mathcal{F}}^{2} \exp\left(-\frac{\left\|N_{i}-N_{j}\right\|_{2}^{2}}{2\delta_{\mathcal{L}}^{2}}\right),
\end{align*}
 where $\delta_\mathcal{F}$, $\delta_{\mathcal{L}}$ are hyperparameters,and the specific calculation of the hyperparameters is given in \cite{Su}. Define $	K\big(\boldsymbol{N},\boldsymbol{N}\big)=\left(\kappa(N_{i},N_{j})\right)\in \mathbb R^{d\times d}$.
 Then, we can express the prior distribution of the observed values $\boldsymbol{\omega}$ as
\begin{equation}
	\boldsymbol{\omega}\sim
	\mathcal{N}\left(\boldsymbol{0},\,
	K\big(\boldsymbol{N},\boldsymbol{N}\big)+\delta^{2}I_{d}\right).
\end{equation}
The joint prior distribution of the observed value $\boldsymbol{\omega}$ and the
predicted value $\boldsymbol{\omega}_{*}$ is given by
\begin{equation}
	\begin{bmatrix}
		\boldsymbol{\omega}\\
		\boldsymbol{\omega}_{*}
	\end{bmatrix}
	\sim
	\mathcal{N}\left(
	\begin{bmatrix}
		\boldsymbol{0}\\
		\boldsymbol{0}
	\end{bmatrix},
	\begin{bmatrix}
		K\big(\boldsymbol{N},\boldsymbol{N}\big)+\delta^{2}I_{d} & K\big(\boldsymbol{N},\boldsymbol{N}_{*}\big)\\
		K\big(\boldsymbol{N}_{*},\boldsymbol{N}\big) & K\big(\boldsymbol{N}_{*},\boldsymbol{N}_{*}\big)
	\end{bmatrix}
	\right).
\end{equation}
Then the conditional expectation and variance of $\mathcal{F} (n)$ is given by
\begin{equation}
	\mathbb{E}[\mathcal{F}(N)|\boldsymbol{\omega}]
	=K\big(\boldsymbol{N}_{*},\boldsymbol{N}\big)
	\left[K\big(\boldsymbol{N},\boldsymbol{N}\big)+\delta^{2}I_{d}\right]^{-1}
	\boldsymbol{\omega},
\end{equation}
\begin{equation}
Var[\mathcal{F}(N)|\boldsymbol{\omega}]
	=K\big(\boldsymbol{N}_{*},\boldsymbol{N}_{*}\big)
	-K\big(\boldsymbol{N}_{*},\boldsymbol{N}\big)
	\left[K\big(\boldsymbol{N},\boldsymbol{N}\big)+\delta^{2}I_{d}\right]^{-1}
	K\big(\boldsymbol{N},\boldsymbol{N}_{*}\big).
\end{equation}
For the output $\omega_{*}$ in the test set, we use the mean of the Gaussian
regression as its estimate, i.e. $\widehat{\omega}_{*}=\mathbb{E}(\mathcal{F}(N)|\boldsymbol{\omega})$.

\section{Numerical experiments}\label{Sec5}

In Section \ref{TGM}, we discuss the convergence of the TGM. The TGM contains only one coarse level. When the size of the original matrix is too large, the order of the coarse matrix obtained by projection may still be high, and it remains difficult to solve the coarse-grid equation exactly. A natural approach to resolve this difficulty is to recursively apply the TGM on this coarse level to generate coarser levels successively, until the order of the coarsest level is sufficiently small. Except for the coarsest level, each of the remaining grid levels performs only one such recursive procedure, which yields the V-cycle of the multigrid method\cite{Ruge,Stu,XJC}.

\begin{figure}[h]
	\begin{minipage}{1.0 \linewidth}
		\centering
		\includegraphics[width=4in]{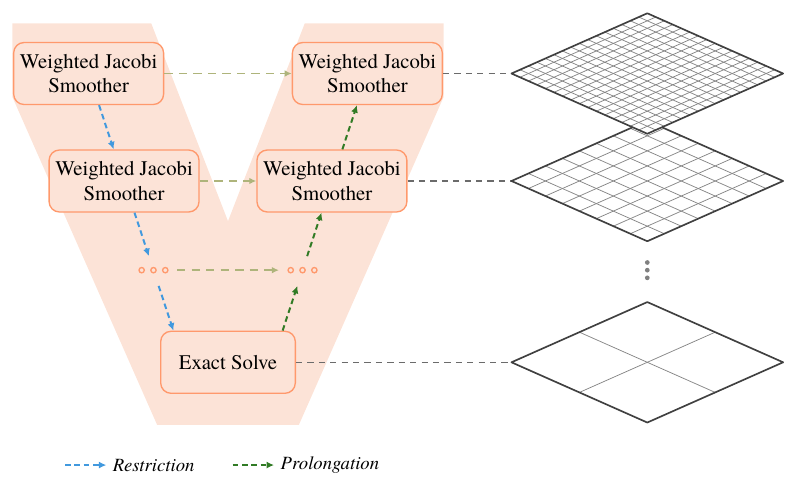}
	\end{minipage}
	\caption{ V-cycle sketch map   }
\end{figure}

In this section, we present several numerical experiments to demonstrate the computational efficiency of the multigrid method and verify the acceleration effect of using GPR-predicted parameters on the multigrid solver. Specifically, we set the order of the matrix on the coarsest level of the multigrid method to be no more than 1000. Except for the coarsest level, one weighted Jacobi iteration is applied for pre-smoothing and post-smoothing on all remaining grid levels, and the same input relaxation parameter is used for all weighted Jacobi iterations. In the experiments, we consider three test cases for comparison: the multigrid method with \(\omega=1\) as the relaxation parameter, the multigrid method using GPR-predicted values as the relaxation parameter, and the unpreconditioned conjugate gradient method.

The training strategy of GPR is as follows: starting from \(N=5\) with a step size of 5 up to \(N=50\), we construct a training set consisting of 10 samples. For each problem size, the quasi-optimal parameter \(\omega\) is obtained via grid search over the interval \([0.001,1]\) with a step size of 0.001.For all algorithms, the zero vector is used as the initial iterate, with the stopping criterion
\begin{align*}
	\text{RRES}=\frac{\|Au-b\|_{\infty}}{\|b\|_{\infty}}\leq 10^{-7}.
\end{align*}
All experiments are carried out using MATLAB (version R2025b) on a laptop equipped with an Intel Core i5-9300H CPU (4 cores, 2.40 GHz) and 8 GB of memory.

\begin{example}\rm  Consider the two-dimensional Riesz space fractional diffusion equation (\ref{new1}) defined on the domain $(0, 1) \times (0, 1) \times (0, 1)$. The exact solution is given by $u(x, y, t) = 2(t^3 + 0.01)xy(1 - x)(1 - y)$, and the initial condition is $u_0(x, y) = 0.02xy(1 - x)(1 - y)$. We take $d(x,y,t)=\ln(1+x)\,(y+1)^2 $.  The source term $f$ is derived from the exact solution.

\begin{table}[H]
	\centering
	\caption{Example 5.1}
	\label{table2}
	\small
	\setlength{\tabcolsep}{7pt}
	\renewcommand{\arraystretch}{1.12}
	\begin{tabular}{c l l c c c c}
		\toprule
		\multirow{2}{*}{$(\gamma_{1},\gamma_{2})$} 
		& \multirow{2}{*}{Method} 
		& \multirow{2}{*}{Index}
		& \multicolumn{4}{c}{$N$} \\
		\cmidrule(lr){4-7}
		& & & 64 & 128 & 256 & 512 \\
		\midrule
		
		\multirow{6}{*}{$(1.4,1.4)$}
		& \multirow{2}{*}{Default} & IT  & 27  & 21  & 19  & 18  \\
		&                          & CPU & 0.1710 & 1.0781 & 3.7969 & 20.7812 \\
		\cmidrule(lr){2-7}
		& \multirow{2}{*}{GPR}     & IT  & 13  & 14  & 14  & 14  \\
		&                          & CPU & 0.0625 & 0.3596 & 2.1875 & 15.3594 \\
		\cmidrule(lr){2-7}
		& \multirow{2}{*}{CG}      & IT  & 97  & 161 & 278 & 447 \\
		&                          & CPU & 0.2969 & 2.3125 & 32.7656 & 440.1875 \\
		\midrule
		
		\multirow{6}{*}{$(1.5,1.6)$}
		& \multirow{2}{*}{Default} & IT  & 30  & 24  & 20  & 17  \\
		&                          & CPU & 0.2656 & 1.1562 & 3.5781 & 21.0781 \\
		\cmidrule(lr){2-7}
		& \multirow{2}{*}{GPR}     & IT  & 16  & 17  & 18  & 19  \\
		&                          & CPU & 0.1094 & 0.5156 & 2.8212 & 21.6406 \\
		\cmidrule(lr){2-7}
		& \multirow{2}{*}{CG}      & IT  & 111 & 199 & 345 & 585 \\
		&                          & CPU & 0.1875 & 2.5940 & 42.6407 & 552.3438 \\
		\midrule
		
		\multirow{6}{*}{$(1.7,1.9)$}
		& \multirow{2}{*}{Default} & IT  & 72  & 60  & 44  & 33  \\
		&                          & CPU & 0.3906 & 2.0469 & 7.2656 & 39.1406 \\
		\cmidrule(lr){2-7}
		& \multirow{2}{*}{GPR}     & IT  & 23  & 26  & 30  & 31  \\
		&                          & CPU & 0.0938 & 0.8125 & 4.3975 & 37.4217 \\
		\cmidrule(lr){2-7}
		& \multirow{2}{*}{CG}      & IT  & 155 & 303 & 567 & 1072 \\
		&                          & CPU & 0.2812 & 3.7969 & 67.3438 & 1067.7187 \\
		\bottomrule
	\end{tabular}
\end{table}

	It can be seen that the GPR parameter selection strategy is significantly superior to the Default strategy in most cases. For example, in Table~\ref{table2} for $(\gamma_1,\gamma_2)=(1.4,1.4)$ and $N=512$, Default requires 18 iterations and 20.7812~s, whereas GPR requires only 14 iterations and 15.3594~s; in Table~2 for $(\gamma_1,\gamma_2)=(1.8,1.8)$ and $N=512$, Default requires 18 iterations and 20.4062~s, whereas GPR requires only 14 iterations and 15.9375~s. As $N$ increases, the iteration counts of GPR remain basically in the range of 13--31, exhibiting a favorable mesh-independent convergence trend; although the iteration counts of Default decrease with increasing $N$ in some test cases, they are still overall higher than those of GPR. The iteration counts and CPU times of CG grow fastest with $N$; at $N=512$, CG usually requires hundreds of iterations and hundreds of seconds of CPU time, which is far higher than those of the two multigrid methods. For a few parameter combinations, the CPU times of Default and GPR are already relatively close. However, over all test cases, the relaxation parameters predicted by GPR can effectively reduce both the iteration counts and the computing time.

\end{example}

\begin{example}\rm  Consider the two-dimensional Riesz space fractional diffusion equation (\ref{new1}) defined on the domain $(0, 1) \times (0, 1) \times (0, 1)$. The exact solution is given by $u(x, y, t) = e^{-2t}\sin(\pi x)\sin(\pi y)$, and the initial condition is $u_0(x, y) = \sin(\pi x)\sin(\pi y)$.We take $d(x,y,t)=10^{-6}$.  The source term $f$ is derived from the exact solution.

	\begin{table}[H]
		\centering
		\caption{Example 5.2}
		\label{table3}
		\small
		\setlength{\tabcolsep}{7pt}
		\renewcommand{\arraystretch}{1.12}
		\begin{tabular}{c l l c c c c}
			\toprule
			\multirow{2}{*}{$(\gamma_{1},\gamma_{2})$} 
			& \multirow{2}{*}{Method} 
			& \multirow{2}{*}{Index}
			& \multicolumn{4}{c}{$N$} \\
			\cmidrule(lr){4-7}
			& & & 64 & 128 & 256 & 512 \\
			\midrule
			
			\multirow{6}{*}{$(1.2,1.2)$}
			& \multirow{2}{*}{Default} & IT  & 39  & 29  & 22  & 21  \\
			&                          & CPU & 0.1875 & 1.2812 & 4.0469 & 23.8595 \\
			\cmidrule(lr){2-7}
			& \multirow{2}{*}{GPR}     & IT  & 13  & 14  & 14  & 14  \\
			&                          & CPU & 0.0781 & 0.5312 & 2.8906 & 17.0781 \\
			\cmidrule(lr){2-7}
			& \multirow{2}{*}{CG}      & IT  & 71  & 117 & 189 & 302 \\
			&                          & CPU & 0.2031 & 1.3906 & 22.8125 & 397.6406 \\
			\midrule
			
			\multirow{6}{*}{$(1.5,1.5)$}
			& \multirow{2}{*}{Default} & IT  & 30  & 24  & 17  & 16  \\
			&                          & CPU & 0.3281 & 1.0625 & 3.2188 & 18.4712 \\
			\cmidrule(lr){2-7}
			& \multirow{2}{*}{GPR}     & IT  & 13  & 13  & 13  & 14  \\
			&                          & CPU & 0.1406 & 0.3125 & 2.4219 & 16.5000 \\
			\cmidrule(lr){2-7}
			& \multirow{2}{*}{CG}      & IT  & 72  & 125 & 215 & 369 \\
			&                          & CPU & 0.1406 & 1.5469 & 24.6250 & 419.6719 \\
			\midrule
			
			\multirow{6}{*}{$(1.8,1.8)$}
			& \multirow{2}{*}{Default} & IT  & 59  & 49  & 34  & 18  \\
			&                          & CPU & 0.2970 & 1.7500 & 6.0938 & 20.4062 \\
			\cmidrule(lr){2-7}
			& \multirow{2}{*}{GPR}     & IT  & 14  & 14  & 14  & 14  \\
			&                          & CPU & 0.0781 & 0.3906 & 2.7031 & 15.9375 \\
			\cmidrule(lr){2-7}
			& \multirow{2}{*}{CG}      & IT  & 83  & 161 & 302 & 568 \\
			&                          & CPU & 0.2031 & 2.1406 & 36.0000 & 688.1875 \\
			\bottomrule
		\end{tabular}
	\end{table}

It can be seen that the GPR parameter selection strategy is overall significantly superior to Default. For example, for $(\gamma_1,\gamma_2)=(1.2,1.2)$ and $N=512$, Default requires 21 iterations and 23.8595~s, whereas GPR requires only 14 iterations and 17.0781~s; for $(\gamma_1,\gamma_2)=(1.8,1.8)$ and $N=512$, Default requires 18 iterations and 20.4062~s, whereas GPR requires only 14 iterations and 15.9375~s. As $N$ increases, the iteration counts of GPR remain basically stable at 13--14, exhibiting a favorable mesh-independent convergence trend; although the iteration counts of Default also decrease with increasing $N$, they are still overall higher than those of GPR, and the gap is particularly evident at $N=64$ and $N=128$. The iteration counts and CPU times of CG grow fastest with $N$; at $N=512$, CG usually requires hundreds of iterations and hundreds of seconds of CPU time. For instance, for $(\gamma_1,\gamma_2)=(1.8,1.8)$, CG requires 568 iterations and 688.1875~s, which is far higher than those of the two multigrid methods. For a few parameter combinations (such as $(\gamma_1,\gamma_2)=(1.5,1.5)$ and $N=512$), the CPU times of Default and GPR are already relatively close, but GPR is still no worse than Default. Over all test cases, the relaxation parameters predicted by GPR can effectively reduce both the iteration counts and the computing time, further verifying the effectiveness and superiority of the data-driven parameter selection strategy for accelerating the multigrid method.
\end{example}

\section{Conclusion}\label{conclusion} In this paper, the weighted Jacobi method is adopted as the smoother in a multigrid method for solving the two-dimensional Riesz space fractional diffusion equation. To address the problem of relaxation parameter selection, a GPR-based parameter selection strategy is proposed. Numerical experiments verify the correctness of the relevant theory and confirm the high computational efficiency of this data-driven algorithm.
Moreover, the combination of neural networks and traditional numerical methods has become increasingly important in recent years. In fact, neural networks exhibit spectral bias, which makes them naturally complementary to multigrid methods. As a result, a growing number of researchers have combined the two and obtained many important results. Nevertheless, reliable mathematical explanations for these works remain lacking. In our subsequent work, we will provide corresponding analysis and design more efficient and robust algorithms.\\

\noindent \textbf{Data availability:} No data was used for the research described in the article.\\

\noindent \textbf{Acknowledgements:} The authors are thankful to the editors and the anonymous  referees for their valuable comments to improve the paper.\\

\noindent \textbf{Funding:}  This work was supported by the National Natural Science Foundation of China (12671450), the National Key Research and Development Program of China (2023YFB3001604), and the Graduate Research Innovation Project of Xiangtan University (XDCX2026Y276).

\end{document}